\newtheorem{theorem}{Theorem}
\newtheorem{proposition}{Proposition}
\newtheorem{lemma}{Lemma}
\newtheorem{corollary}{Corollary}
\newcommand{\bbC}{{\mathord{\mathbb{C}}}}
\newcommand{\bbD}{{\mathord{\mathbb{D}}}}
\newcommand{\bbR}{{\mathord{\mathbb{R}}}}
\newcommand{\bbT}{{\mathord{\mathbb{T}}}}
\newcommand{\bbZ}{{\mathord{\mathbb{Z}}}}
\newcommand{\re}{\mathop{\mathrm{Re}}}
\newcommand{\supp}{\mathop{\mathrm{supp}}}
\newcommand{\Diff}{\mathop{\mathrm{Diff}}\nolimits}
\newcommand{\GL}{\mathop{\mathrm{GL}}}
\newcommand{\Hom}{\mathop{\mathrm{Hom}}\nolimits}
\newcommand{\Sp}{\mathop{\mathrm{Sp}}\nolimits}
\newcommand{\SU}{\mathop{\mathrm{SU}}}
\newcommand{\sll}{\mathop{\mathrm{sl}}}
\newcommand{\Ad}{\mathop{\mathrm{Ad}}}
\newcommand{\fin}{{\mathord{\mathrm{fin}}}}
\newcommand{\cA}{{\mathord{\mathcal{A}}}}
\newcommand{\cM}{{\mathord{\mathcal{M}}}}
\newcommand{\cP}{\mathord{\mathcal{P}}}
\newcommand{\cN}{\mathord{\mathcal{N}}}
\newcommand{\cH}{\mathord{\mathcal{H}}}
\newcommand{\cR}{\mathord{\mathcal{R}}}
\newcommand{\frA}{\mathord{\mathfrak{A}}}
\newcommand{\frg}{\mathord{\mathfrak{g}}}
\newcommand{\frt}{\mathord{\mathfrak{t}}}
\newcommand{\al}{{\mathord{\alpha}}}
\newcommand{\vf}{{\mathord{\varphi}}}
\newcommand{\si}{{\mathord{\sigma}}}
\newcommand{\vk}{{\mathord{\varkappa}}}
\newcommand{\la}{{\mathord{\lambda}}}
\newcommand{\eps}{{\mathord{\epsilon}}}
\newcommand{\ze}{{\mathord{\zeta}}}
\newcommand{\one}{{\mathord{\mathbf1}}}
\newdimen\theight
\newcommand\mrk[1]{%
             \vadjust{\setbox0=\hbox{\kern10pt\quad#1}%
             \theight=\ht0
             \advance\theight by \dp0    \advance\theight by \lineskip
             \kern -\theight \vbox to \theight{\rightline{\rlap{\box0}}%
             \vss}%
             }}%
\let\wh=\widehat
\let\ov=\overline
\let\td=\tilde
\title{\bf Invariant function algebras on compact commutative homogeneous spaces
\footnote{MSC 2000: primary 46J10, secondary 14R20, 32E20,
46J20.}\footnote{Key words: invariant function algebra,
commutative homogeneous space, maximal ideal space.}}
\author{V.M. Gichev\thanks{Partially supported by RFBR grants
06-08-01403, 06-07-89051.}}
\date{}
\begin{document}
\maketitle
\begin{center}
{\it Dedicated to E.B.\,Vinberg on the occasion of his 70th
birthday}
\end{center}

{\abstract{Let $M$ be a commutative homogeneous space of a compact
Lie group $G$ and $A$ be a closed $G$-invariant subalgebra of the
Banach algebra $C(M)$. A function algebra is called antisymmetric
if it does not contain nonconstant real functions. By the main
result of this paper, $A$ is antisymmetric if and only if the
invariant probability measure on $M$ is multiplicative on $A$.
This implies, for example, the following theorem: if $G^\bbC$ acts
transitively on a Stein manifold $\cM$, $v\in\cM$, and the compact
orbit $M=Gv$ is a commutative homogeneous space, then $M$ is a
real form of $\cM$.}}

\section{Introduction}
Let $M=G/H$ be a homogeneous space of a compact connected Lie
group $G$, where $H$ is the stable subgroup of the base point $o$.
$M$ is called {\it multiplicity free} if the quasiregular
representation of $G$ in $C(M)$ contains every irreducible
representation of $G$ with a multiplicity $0$ or $1$. This
remarkable class of homogeneous spaces can be characterized by
each of the following properties (see \cite{Vi} or \cite{W07}; the
noncompact case is more complicated):
\begin{itemize}
\item[\rm 1)] $M$ is {\it commutative}, i.e., the algebra of all
invariant differential operators on $M$ is commutative; \item[\rm
2)] $(G,H)$ is a {\it Gelfand pair}, i.e., the convolution algebra
of all left and right $H$-invariant functions in $L^1(G)$ (or
measures) is commutative; \item[\rm 3)] a generic $G$-orbit in the
cotangent bundle $T^*M$ is coisotropic (a submanifold $L$ of a
symplectic manifold is called {\it coisotropic} if
$T_pL^\bot\subseteq T_pL$
for each $p\in L$); 
\item[\rm 4)] $M$ is {\it weakly commutative} (the Poisson algebra
$C^\infty(T^*M)$ is commutative); \item[\rm 5)] $M$ is {\it weakly
symmetric} (there exists an automorphism $\vk$ of $G$ such that
$\vk(g)\in Hg^{-1}H$ for all $g\in G$); \item[\rm 6)]
$M^\bbC=G^\bbC/H^\bbC$ is {\it spherical}, i.e., a Borel subgroup
of $G^\bbC$ has an open orbit in $M^\bbC$.
\end{itemize}
This class has been intensively investigated last years
(\cite{AV}, \cite{BeRa}-\cite{Br86}, \cite{L96}-\cite{L01},
\cite{Pa94}-\cite{Pa03}, \cite{Vi}, \cite{W07}; the list is far
from being complete). An important example of a commutative
homogeneous space is the group $G$ acting on itself by left and
right translations.

The space $C(M)$ of all continuous complex valued functions on $M$
equipped with the sup-norm is a Banach algebra. We say that $A$ is
an {\it invariant function algebra on $M$} if $A$ is a
$G$-invariant closed subalgebra of $C(M)$ which contains constant
functions. Invariant function algebras have been studied since 50s
as a natural generalization of algebras of analytic functions
(\cite{Ag1}-\cite{AgV}, \cite{AS}-\cite{Bj}, \cite{Ga}, \cite{Gl},
\cite{Ri}-\cite{RuBS}, \cite{Wo}). The structure of bi-invariant
function algebras on compact groups is well understood. In 1965,
Gangolli (\cite{Ga}) and Wolf (\cite{Wo}) proved that each
bi-invariant algebra on a semisimple group is self-conjugate with
respect to the complex conjugation. Then, due to the
Stone--Weierstrass theorem, the problem becomes purely geometric.
On the other hand, there are many bi-invariant algebras on tori
$\bbT^n$: they are in one-to-one correspondence with semigroups in
the dual group $\bbZ^n$ and may be {\it antisymmetric} (i.e.,
contain no nonconstant real function). Algebras of analytic
functions on complex analytic varieties are antisymmetric. The
converse is not true: there are examples of antisymmetric function
algebras on compact subsets of $\bbC^n$ with no analytic structure
in their maximal ideal spaces (see, for example, \cite{St}).
However, antisymmetric invariant function algebras have nontrivial
analytic structure in all known examples.

The maximal ideal space $\cM_A$ of a bi-invariant algebra $A$ has
a natural semigroup structure (with $G$ as the group of units) and
a compatible analytic structure (\cite{AS}, \cite{Gi79}).
Idempotents of the semigroup correspond to the Haar measures of
some subgroups. The following criterion of antisymmetry holds: a
bi-invariant algebra $A$ on a compact group $G$ is antisymmetric
if and only if the Haar measure of $G$ is multiplicative on $A$.
For generic homogeneous spaces, this is not true.

In this paper, we prove  that the criterion above holds for
commutative homogeneous spaces: an invariant function algebra on
such a space is antisymmetric if and only if the invariant
probability measure is multiplicative on the algebra
(Theorem~\ref{sphan}). The part ``if'' is easy. To prove the
converse, we use a result of Latypov (\cite{La99}): all invariant
function algebras on a homogeneous space $M=G/H$ are
self-conjugate with respect to the complex conjugation if and only
if $M$ satisfies the following property (F): the group $N/H$,
where $N$ is the normalizer of $H$ in $G$, is finite.
For $M^\bbC$, this means that it is closed in any ambient affine
$G^\bbC$-manifold.

If an invariant algebra $A$ on a commutative homogeneous space $M$
is antisymmetric and finitely generated (i.e., generated by a
finite dimensional invariant subspace), then $\cM_A$ can be
realized as the polynomial hull of a $G$-orbit in a nilpotent
spherical $G^\bbC$-orbit in a $G^\bbC$-module
(Theorem~\ref{ckanm}, (b)). Such orbits were studied by Panyushev
in papers \cite{Pa94}-\cite{Pa03}. They have many remarkable
properties and admit a simple description for some special
$G^\bbC$-modules.

Every invariant function algebra $A$ on a commutative homogeneous
space $M$ is completely determined by its ``antisymmetric part''.
$\cM_A$ is fibred over a commutative homogeneous space $\td M$; a
fibre is the maximal ideal space of an antisymmetric invariant
function algebra on a commutative homogeneous space. There is a
natural surjective homomorphism $A\to C(\td M)$, whose kernel is
the maximal proper invariant ideal of $A$ (Theorem~\ref{geifa},
Theorem~\ref{hulls}). In particular, $A$ is self-conjugate if and
only if it contains no proper invariant ideal.

The following theorem is a consequence of the mentioned results.
\begin{theorem}\label{comac}
Let $G$ be a compact Lie group and $\cM$ be a complex analytic
Stein manifold. Suppose that $G^\bbC$ acts transitively on $\cM$
by holomorphic transformations. Let $v\in\cM$. If $M=Gv$ is a
commutative homogeneous space, then $M$ is a real form of $\cM$.
\end{theorem}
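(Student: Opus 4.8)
The plan is to deduce Theorem~\ref{comac} from the main theorem (Theorem~\ref{sphan}) together with the structural results on $\cM_A$ mentioned in the introduction. The key object is the algebra $A = \mathcal{O}(\cM)|_M$, i.e.\ the closure in $C(M)$ of the restrictions to $M$ of holomorphic functions on $\cM$. Since $\cM$ is Stein and $G^\bbC$ acts transitively, $G$ acts transitively on its compact orbit $M = Gv$, so $M = G/H$ is a commutative homogeneous space by hypothesis, and $A$ is a $G$-invariant closed subalgebra of $C(M)$ containing constants — an invariant function algebra on $M$. Because holomorphic functions on a complex manifold separate no more than the complex structure allows, $A$ is antisymmetric: any real-valued $f \in A$ would be a real-valued function that is a uniform limit of holomorphic functions, hence (by the maximum principle applied on $\cM$, or by the fact that $\re h$ pluriharmonic and bounded forces local constancy along the $G^\bbC$-orbit directions) constant on $M$ by transitivity. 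This is the first step.

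Next I would invoke Theorem~\ref{sphan}: since $A$ is an antisymmetric invariant function algebra on the commutative homogeneous space $M$, the invariant probability measure $\mu$ on $M$ is multiplicative on $A$. Multiplicativity of $\mu$ means the functional $f \mapsto \int_M f\,d\mu$ is a character of $A$; equivalently, $\int_M fg\,d\mu = \bigl(\int_M f\,d\mu\bigr)\bigl(\int_M g\,d\mu\bigr)$ for all $f,g \in A$. The point of introducing $\mu$ is that it is the unique $G$-invariant state, and its multiplicativity is a strong rigidity constraint: combined with the fact that $\mathcal{O}(\cM)$ separates points of $\cM$ (Stein), it will force the restriction map $\mathcal{O}(\cM) \to C(M)$, and hence $\cM$ itself, to be ``as small as possible'' relative to $M$.

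The third and central step is to translate multiplicativity of $\mu$ into a statement about $\cM$ and identify $M$ as a real form. Here I would use the description of $\cM_A$ from Theorem~\ref{ckanm}(b) and Theorem~\ref{hulls}: since $A$ is antisymmetric, $\cM_A$ carries the analytic structure coming from a nilpotent spherical $G^\bbC$-orbit, and the natural map $\cM \to \cM_A$ induced by restriction is then a $G^\bbC$-equivariant holomorphic embedding onto an open orbit whose closure is the polynomial hull of $M$. Multiplicativity of $\mu$ says precisely that $\mu$ is itself a point of $\cM_A$, and being $G$-invariant it must be the unique fixed point; on the polynomial hull this fixed point is the idempotent/vertex of the orbit. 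One then argues that $\cM$, sitting as the open dense orbit, must coincide with the complexification $M^\bbC = G^\bbC/H^\bbC$, and that $M$ is totally real of real dimension $\dim_\bbC \cM$ in it — i.e.\ a real form. Property~(F) for $M$ (finiteness of $N/H$), which holds here because $\cM$ is Stein so $M^\bbC$ is closed in the ambient affine $G^\bbC$-manifold, guarantees via Latypov's theorem the consistency of this picture (there are then no proper invariant ideals obstructing the identification).

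The main obstacle I anticipate is precisely this last identification: passing from ``$\mu$ is multiplicative on $A$'' to ``$M$ is a real form of $\cM$'' requires knowing that $A$ is large enough — that $\mathcal{O}(\cM)|_M$ actually generates the full antisymmetric structure and that no collapsing occurs when restricting from $\cM$ to $M$. Controlling the map $\cM \to \cM_A$ (injectivity, and that its image is exactly the open orbit rather than a proper subvariety) is where the Stein hypothesis and the spherical-nilpotent-orbit machinery of Panyushev must be used carefully; in particular one needs that a $G$-invariant Stein manifold with a transitive $G^\bbC$-action and a compact commutative $G$-orbit has no nonconstant bounded pluriharmonic functions obstructing antisymmetry, and that its ring of global holomorphic functions is ``polynomial-like'' enough for the hull description to apply. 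Once that map is understood, identifying $\dim_\bbR M = \dim_\bbC \cM$ and total reality of $M$ is a dimension count plus the observation that $T_v M \cap i\,T_v M = 0$, which follows from antisymmetry of $A$ restricted to directions transverse to $M$.
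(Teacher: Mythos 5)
Your first step is false, and it inverts the logic of the theorem. You claim that $A=\clos(\mathcal{O}(\cM)|_M)$ is antisymmetric because a real-valued uniform limit of restrictions of holomorphic functions must be constant on $M$ ``by the maximum principle.'' There is no such principle here: $\cM$ is a noncompact Stein manifold, and restrictions of holomorphic functions to a compact orbit can perfectly well be real-valued and nonconstant. The simplest example already kills the claim: $G=\bbT$ acting on $\cM=\bbC^*$, $M$ the unit circle; then $z+z^{-1}$ restricts to $2\cos\theta\in A$, nonconstant and real. Consequently Theorem~\ref{sphan} does \emph{not} apply to give multiplicativity of $\mu$ (and indeed $\int z\cdot z^{-1}\,d\mu=1\neq 0=\int z\,d\mu\cdot\int z^{-1}\,d\mu$). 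The situation the theorem describes is the exact opposite of antisymmetry: the Stein hypothesis forces the invariant algebra to be as large as possible, namely $P(M)=C(M)$, which is what makes $M$ a real form. Your third step, built on the antisymmetric picture (nilpotent orbits, the $G$-fixed point of $\cM_A$, Theorem~\ref{ckanm}(b)), is therefore aimed at the wrong half of the dichotomy and cannot be repaired within your framework.

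The paper's actual route is short: by Matsushima and Onishchik, a Stein homogeneous space of $G^\bbC$ embeds equivariantly as a \emph{closed} $G^\bbC$-orbit in a finite-dimensional module $V$. Theorem~\ref{ckanm}(a) (whose proof runs through Lemma~\ref{iitri}: a closed complex orbit forces $P(M)$ to have no proper invariant ideal, hence by Corollary~\ref{iisel} and Stone--Weierstrass $P(M)=C(M)$ --- this is where commutativity of $M$ enters) gives $P(M)=C(M)$, and then \cite[Theorem~2]{GL} identifies such polynomially convex orbits as real forms. If you want to salvage your proposal, the piece to keep is the idea of studying the invariant algebra generated by holomorphic functions; but the correct target statement about it is self-conjugacy and density in $C(M)$, not antisymmetry.
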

``A real form'' above means that there exists an antiholomorphic
involution of $\cM$ which commutes with $G$ such that $M$ is the
set of its fixed points. Then $\cM\cong M^\bbC$, where
$M^\bbC=G^\bbC/H^\bbC$ and $H$ is the stable subgroup of $v$ in
$G$.
\begin{corollary}\label{nooth}
Let $M=G/H$ be a commutative homogeneous space. If $M$ satisfies
{\rm (F)}, then $M^\bbC=G^\bbC/H^\bbC$ contains no commutative
$G$-orbit, except for $M$.
\end{corollary}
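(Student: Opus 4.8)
The plan is to derive the corollary from Theorem~\ref{comac} by first reducing to the Stein case. Suppose $M' = Gv$ is a commutative $G$-orbit in $M^\bbC = G^\bbC/H^\bbC$. The key point is that $M^\bbC$, being the affine complexification of a homogeneous space $M=G/H$ satisfying property (F), is a Stein manifold on which $G^\bbC$ acts transitively by holomorphic transformations; this is exactly the remark in the introduction that property (F) forces $M^\bbC$ to be closed in any ambient affine $G^\bbC$-manifold, hence affine, hence Stein. (Here one uses that the complexification of a compact group acting on a homogeneous space in the commutative — i.e. spherical — setting produces an affine variety $G^\bbC/H^\bbC$ whenever $H^\bbC$ is the full complexification, which property (F) guarantees.) So Theorem~\ref{comac} applies with $\cM = M^\bbC$ and the point $v$: the orbit $M' = Gv$ must be a real form of $M^\bbC$.

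Now I would argue that a real form of $M^\bbC$ inside $M^\bbC$ that is also a $G$-orbit must coincide with $M$ itself. By the description following Theorem~\ref{comac}, $M'$ being a real form means there is an antiholomorphic involution $\tau$ of $M^\bbC$ commuting with $G$ with $M' = (M^\bbC)^\tau$, and moreover $M^\bbC \cong (M')^\bbC = G^\bbC/(H')^\bbC$ where $H'$ is the stabilizer of $v$ in $G$. On the other hand $M^\bbC = G^\bbC/H^\bbC$ by hypothesis. Comparing: the base point $o$ of $M = G/H$ sits inside $M^\bbC$, and its $G$-orbit is $M$; I claim $M$ is already a real form. Indeed $M^\bbC$ carries the standard antiholomorphic involution $\sigma$ fixing $M$ (coming from the real structure $G^\bbC \supset G$, since $H^\bbC$ is the complexification of $H$ and is therefore $\sigma$-stable), and $M = (M^\bbC)^\sigma_0$ — more precisely $M$ is the unique closed $G$-orbit, and it is a real form. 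So both $M$ and $M'$ are real forms, each of real dimension equal to $\dim_\bbC M^\bbC$; a $G$-orbit of that dimension which is totally real must be open in the fixed-point set of its associated conjugation, and the uniqueness of the closed orbit (together with the fact that real forms are closed) pins it down. The cleanest route is: $M'$ is compact (a real form of a Stein manifold is compact here because it is a $G$-orbit, and $G$ is compact), $M'$ is totally real of full dimension, and $M^\bbC$ retracts onto $M$; any such $M'$ equals $M$ because the polynomial/holomorphic hull considerations of Theorem~\ref{hulls} force the antisymmetric invariant algebra generated by holomorphic functions to have $M$ as its Shilov-type boundary, leaving no room for a second full-dimensional totally real $G$-orbit.

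The step I expect to be the main obstacle is verifying that $M^\bbC$ is genuinely Stein — i.e. that property (F) really does make $G^\bbC/H^\bbC$ affine — cleanly enough to invoke Theorem~\ref{comac}, and then ruling out the degenerate possibility that $M'$ is a \emph{different} real form of $M^\bbC$ from $M$ (a priori $M^\bbC$ could have several non-conjugate real forms). To handle the latter I would use that $M'$, being a $G$-orbit and a real form, forces the stabilizer $H' = G_v$ to satisfy $(H')^\bbC = H^\bbC$ up to conjugacy in $G^\bbC$; combined with $H' \subseteq G$ compact and $H \subseteq G$ compact both having the same complexification, a standard fact about compact real forms of reductive (here, of the relevant spherical) subgroups gives that $H'$ and $H$ are conjugate by an element of $G^\bbC$ normalizing $H^\bbC$, and property (F) — finiteness of $N/H$ — then forces $H' $ to be $G$-conjugate to $H$, so $M' = M$ as $G$-spaces inside $M^\bbC$. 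Everything else is bookkeeping with the correspondences already set up in Theorems~\ref{comac}, \ref{geifa}, and \ref{hulls}.
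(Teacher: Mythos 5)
Your overall strategy coincides with the paper's: apply Theorem~\ref{comac} to the Stein manifold $\cM=M^\bbC$ to conclude that any commutative $G$-orbit $M'=Gv$ is a real form of $M^\bbC$, and then invoke uniqueness of the real form under condition (F). The first half is essentially fine, although your attribution is off: $G^\bbC/H^\bbC$ is affine, hence Stein, simply because $H^\bbC$ is reductive (Matsushima's criterion), independently of (F); condition (F) is not what makes the manifold Stein, and this is not "the main obstacle."

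The genuine gap is in the uniqueness step. The paper disposes of it in one line by citing \cite[Corollary~6]{GL}: condition (F) implies that the real form of $M^\bbC$ is unique. You instead offer several partial arguments, none of which closes. The appeal to Theorem~\ref{hulls} and ``Shilov-type boundary'' considerations is not an argument: that theorem concerns polynomial hulls of orbits in a $G^\bbC$-module and contains nothing that excludes a second full-dimensional totally real $G$-orbit. Your stabilizer argument ends in a non sequitur: even if you could show that $H'=G_v$ is $G$-conjugate to $H$, that only makes $M'$ and $M$ isomorphic as abstract $G$-spaces, not equal as subsets of $M^\bbC$ --- distinct $G$-orbits in $G^\bbC/H^\bbC$ can perfectly well have conjugate stabilizers. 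Moreover, the ``standard fact'' that two compact subgroups of $G$ with $G^\bbC$-conjugate complexifications are conjugate by an element normalizing $H^\bbC$ is asserted rather than proved, and the passage from that to the conclusion via finiteness of $N_G(H)/H$ is exactly the content that needs an argument. Unless you either cite \cite[Corollary~6]{GL} or actually reprove it, the proof is incomplete precisely at the step where hypothesis (F) does its work.
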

In what follows and in the theorem above, $G$ may be
non-connected; we say that $M=G/H$ is {\it commutative} if it is
multiplicity free. This is true if and only if $(G,H)$ is a
Gelfand pair (see, for example, \cite[Theorem~9.7.1]{W07}). We
need no other characterization but keep the term ``commutative''
due to the following equivalent property: the algebra of all
bounded operators in $L^2(M)$ which commute with $G$ is
commutative.

The criterion of antisymmetry was proved in preprint \cite{GiP},
which also contains an infinite dimensional version of the
Hilbert--Mumford criterion for commutative orbits.

\section{Auxiliary material}
The dual space of a Banach space $X$ is denoted by $X'$. The {\it
maximal ideal space} $\cM_A$ ({\it spectrum}) of a commutative
Banach algebra $A$ with identity element $\one$ is the set
$\mathop{\rm Hom}(A,\bbC)\subset A'$ of all nonzero multiplicative
linear functionals equipped with the ${}*{}$-weak topology of the
dual space $A'$. A {\it function algebra} on a compact Hausdorff
topological space $Q$ is a closed subalgebra of the Banach algebra
$C(Q)$, endowed with the sup-norm,  which contains all constant
functions; the set of them we identify with $\bbC$. Let $A$ be a
function algebra on $Q$.  A set $Y\subseteq Q$ is called {\it a
set of antisymmetry for $A$} if every function which takes only
real values on $Y$ is constant on $Y$. If two sets of antisymmetry
overlaps, then their union is also a set of antisymmetry. Hence,
any such a set is contained in a maximal one, which is necessarily
closed. Distinct maximal sets of antisymmetry are disjoint. This
defines a foliation in $Q$. A function algebra on $Q$ is called
{\it antisymmetric} if $Q$ is a set of antisymmetry. We denote by
$\frA(A)$ the family of all maximal sets of antisymmetry. Let
$f\in C(Q)$. The Shilov--Bishop decomposition theorem says that
\begin{eqnarray}\label{shibi}
f\in A\quad\Longleftrightarrow\quad f|_F\in A|_F\quad\mbox{for
all}\quad F\in\frA(A).
\end{eqnarray}
Moreover, $A|_F$ is closed in $C(F)$ for any $F\in\frA(A)$ (see
\cite[Ch. 2]{Gam} for details).

Let $M(Q)=C(Q)'$ be the space of finite regular Borel measures on
$Q$. We say that $\nu\in M(Q)$ is  {\it a probability measure} if
$\nu\geq0$ and $\nu(Q)=1$. If $\vf\in\cM_A$, then $\|\vf\|=1$ and
$\vf(\one)=1$. Hence, each norm preserving extension of $\vf$ onto
$C(Q)$ is a probability measure. Such a measure is called {\it
representing for $\vf$}. Let $\cM_\vf$ be the set of representing
measures for $\vf$. We say that a measure $\mu$ is {\it
multiplicative on $A$} if $\mu\in\cM_\vf$ for some $\vf\in\cM_A$
(in particular, a multiplicative measure has total mass 1). The
following fact is well-known.
\begin{lemma}\label{repan}
The support of a multiplicative measure is a set of antisymmetry.
\end{lemma}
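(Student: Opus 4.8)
The plan is to unwind the definitions. Let $\mu\in\cM_\vf$ with $\vf\in\cM_A$, so $\mu\geq0$, $\mu(Q)=1$, and $\int_Q f\,d\mu=\vf(f)$ for all $f\in A$; write $S=\supp\mu$. Suppose $f\in A$ takes only real values on $S$; I must show $f$ is constant on $S$. Since $\mu$ is supported on $S$, the number $c:=\vf(f)=\int_S f\,d\mu$ is real. Put $g=f-c\in A$; then $g$ is real-valued on $S$ and $\vf(g)=0$.

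Next I would exponentiate. Because $A$ is a closed subalgebra of $C(Q)$ containing $\one$, the series $h_\pm:=\sum_{n\geq0}(\pm g/2)^n/n!=e^{\pm g/2}$ converge in the sup-norm and hence lie in $A$. As $\vf$ is a continuous algebra homomorphism, $\vf(h_\pm)=e^{\pm\vf(g)/2}=1$, so
\[
\int_Q h_+\,d\mu=\int_Q h_-\,d\mu=1=\int_Q\one\,d\mu ,
\]
and therefore $\int_Q\bigl(h_++h_--2\bigr)\,d\mu=0$.

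Finally, on $S$ the function $g$ is real, so writing $u=e^{g/4}$ (a strictly positive continuous function on $S$) we get $h_++h_--2=u^2+u^{-2}-2=(u-u^{-1})^2\geq0$ pointwise on $S$. A continuous function that is nonnegative on $S=\supp\mu$ and has zero integral against $\mu$ must vanish identically on $S$; hence $u=u^{-1}$, i.e. $u\equiv1$, i.e. $g\equiv0$ on $S$. Thus $f\equiv c$ on $S=\supp\mu$, which is exactly the assertion. The only point requiring a word of care is this last implication (zero $\mu$-integral of a nonnegative continuous function forces vanishing on the support), which is immediate from the definition of the support of a regular Borel measure; an alternative to the $\cosh$-trick is to observe that $t\mapsto\int_S e^{tg}\,d\mu=\vf(e^{tg})=1$ is constant in $t\in\bbR$, differentiate twice at $t=0$ to get $\int_S g^2\,d\mu=0$, and conclude $g\equiv0$ on $S$ since $g^2\geq0$ there. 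No genuine obstacle arises; the lemma is a soft consequence of multiplicativity of $\vf$ together with closedness of $A$ under the holomorphic functional calculus supplied by the exponential series.
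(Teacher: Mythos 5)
Your argument is correct and has no gaps, but it reaches the conclusion through a detour the paper avoids. The underlying mechanism is the same in both proofs: exhibit a function of $f$ lying in $A$ that is continuous and nonnegative on $S=\supp\mu$, use multiplicativity of $\vf$ together with the representing property of $\mu$ to force its integral to vanish, and conclude that it vanishes identically on the support. The paper applies this directly to $(f-c\one)^2$ with $c=\vf(f)$ (which is real because $\mu\geq0$ is carried by $S$ and $f$ is real there): on one hand $\int(f-c\one)^2\,d\mu=\vf((f-c\one)^2)=\vf(f-c\one)^2=0$, and on the other hand the integrand is nonnegative on $S$ and strictly positive somewhere if $f$ is nonconstant there. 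This one-line version uses only that $A$ is an algebra and $\vf$ is a multiplicative linear functional. Your exponential variant additionally invokes the sup-norm closedness of $A$ and the continuity of $\vf$ in order to place $e^{\pm g/2}$ in $A$ and evaluate $\vf$ on it term by term; both facts are available, so nothing breaks, but they are not needed. Indeed, your parenthetical ``alternative'' --- differentiating $t\mapsto\vf(e^{tg})$ twice to obtain $\int g^2\,d\mu=0$ --- is, once the differentiation is stripped away, precisely the paper's computation. So: correct, same essential idea, just carrying more machinery than the statement requires.
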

\begin{proof}
Let $\vf\in\cM_A$, $\eta\in\cM_\vf$. If $f$ is real and
nonconstant on $\supp\eta$, then $\int(f-c\one)^2\,d\eta>0$ for
any $c\in\bbR$. On the other hand, if $c=\vf(f)$, then
$\int(f-c\one)^2\,d\eta=\vf((f-c\one)^2)=\vf(f-c\one)^2=0$.
\end{proof}

Let $G$ be a compact Lie group. We denote by $\tau$ several
different morphisms: the homomorphism of $G$ into the group
$\Diff(M)$ of all diffeomorphisms of the homogeneous space
\begin{eqnarray*}
M=G/H,
\end{eqnarray*}
the quasiregular representation of $G$ in $C(M)$, and its
extension onto the convolution measure algebra $M(G)$:
\begin{eqnarray*}
&\tau(g):\,x\to gx,\\
&\tau_g f(x)=f(g^{-1}x),\\
&\tau_\mu f=\int_G \tau_gf\,d\mu(g),
\end{eqnarray*}
where $g\in G$, $\mu\in M(G)$. Lie algebras are denoted by
corresponding lowercase Gothic letters. The tangent
representations of the Lie algebra $\frg$ will also be denoted by
$\tau$. We do not assume that $G$ acts on $M$ effectively. Let
$o\in M$ be the base point $eH$ of $G/H$, where $e$ denotes the
identity. The following proposition shows that any invariant
function algebra is completely determined by its ``antisymmetric
part'' $\td A$ (the restriction of $A$ onto a maximal set of
antisymmetry).
\begin{proposition}\label{shilb}
Let  $M=G/H$ be a homogeneous space of a compact Lie group $G$,
$A$ be an invariant function algebra on $M$, and $\frA(A)$ be the
family of all maximal sets of antisymmetry for $A$ in $M$. Then
there exists a closed subgroup $\td H\subseteq G$ such that
$H\subseteq \td H$ and
\begin{itemize}
\item[\rm(\romannumeral1)] $\frA(A)$ coincides with the family of
fibres of the projection $\pi:\,M\to\td M$, where $\td M=G/\td H$;
\item[\rm(\romannumeral2)] the restriction $\td A=A|_F$ of $A$ to
the fibre $F=\td H/H$ is an antisymmetric $\td H$-invariant
function algebra on $F$; \item[\rm(\romannumeral3)] $A=\{f\in
C(M):\,\tau_gf|_F\in\td A\quad\mbox{for all}\quad g\in G\}$.
\end{itemize}
In particular, $\td A$ is closed in $C(F)$ and $A$ contains each
continuous function on $M$ that is constant on every fibre; the
mapping $f\to f\circ\pi$ defines the embedding $C(\td M)\to A$.
\end{proposition}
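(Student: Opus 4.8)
The plan is to take $\td H$ to be the $G$-stabilizer of the maximal set of antisymmetry through the base point and then to read off (i)--(iii) from the Shilov--Bishop decomposition \eqref{shibi} together with the $G$-equivariance of the partition of $M$ into maximal sets of antisymmetry. First I would observe that $G$ permutes $\frA(A)$: if $f\in C(M)$ takes only real values on $gF$, then $\tau_{g^{-1}}f\in A$ takes only real values on $F$, so it is constant on $F$ when $F$ is a set of antisymmetry, whence $f$ is constant on $gF$; thus $g$ carries sets of antisymmetry to sets of antisymmetry, and, applying this to $g^{-1}$ as well, maximal ones to maximal ones. Since $G$ is transitive on $M$, it is then transitive on $\frA(A)$: let $F_0\in\frA(A)$ contain $o$; for any $F\in\frA(A)$ pick $x\in F$ and $g$ with $go=x$, then $gF_0$ is a maximal set of antisymmetry meeting $F$, hence $gF_0=F$ because distinct maximal sets of antisymmetry are disjoint.

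Next I would set $\td H=\{g\in G:gF_0=F_0\}$. Since the maximal sets of antisymmetry partition $M$, the conditions $gF_0=F_0$, $gF_0\cap F_0\neq\emptyset$ and $go\in F_0$ are equivalent; hence $\td H$ is the preimage of the closed set $F_0$ (maximal sets of antisymmetry are closed) under the continuous orbit map $g\mapsto go$, so $\td H$ is a closed subgroup, and $H\subseteq\td H$ since $H$ fixes $o$ and therefore stabilizes $F_0$. The same equivalence gives $F_0=\td H\cdot o$, i.e.\ $F_0\cong\td H/H=F$, and the fibre of the projection $\pi:G/H\to G/\td H=\td M$ over $g\td H$ is precisely $gF_0$; together with the transitivity just established, this is (i).

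For (ii): $\td A=A|_{F_0}$ is a closed subalgebra of $C(F_0)$ containing the constants — closedness is exactly the statement following \eqref{shibi} — it is antisymmetric because $F_0$ is a set of antisymmetry, and it is $\td H$-invariant because $\td H$ preserves both $F_0$ and $A$. For (iii): the inclusion ``$\subseteq$'' is immediate from $\tau_gA=A$. For ``$\supseteq$'', take $f\in C(M)$ with $\tau_gf|_{F_0}\in\td A$ for all $g$; by \eqref{shibi} it suffices to prove $f|_F\in A|_F$ for each $F=gF_0\in\frA(A)$, and transporting by the diffeomorphism $x\mapsto gx$ of $F_0$ onto $F$ reduces this to the hypothesis $\tau_{g^{-1}}f|_{F_0}\in\td A=A|_{F_0}$: choosing $\td f\in A$ with $\td f|_{F_0}=\tau_{g^{-1}}f|_{F_0}$, the function $\tau_g\td f\in A$ coincides with $f$ on $F$. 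The final ``in particular'' clause then follows: $\td A$ is closed by \eqref{shibi}; any $f\in C(M)$ constant on the fibres of $\pi$ has each $\tau_gf|_{F_0}$ constant, hence in $\td A$, so $f\in A$ by (iii); and such functions are exactly the $h\circ\pi$ with $h\in C(\td M)$, which yields the isometric algebra embedding $C(\td M)\hookrightarrow A$.

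I do not expect a serious obstacle: the whole argument runs on the Shilov--Bishop decomposition \eqref{shibi}, which is what converts the set-theoretic partition of $M$ into antisymmetry components into the algebraic description (iii). The only point that needs a little care is that $\td H$ is closed and that $F_0$ is a single $\td H$-orbit, and both of these come from the single observation that $gF_0=F_0$ is equivalent to $go\in F_0$, valid because $\frA(A)$ is a partition of $M$.
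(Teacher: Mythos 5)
Your proposal is correct and follows essentially the same route as the paper: the key step in both is that overlapping (maximal) sets of antisymmetry coincide, which identifies $F$ with $\td H o$ for a closed subgroup $\td H\supseteq H$, after which (\romannumeral1)--(\romannumeral3) are read off from the $G$-equivariance of $\frA(A)$ and the Shilov--Bishop decomposition (\ref{shibi}), including the closedness of $A|_F$ in $C(F)$. You simply spell out in more detail what the paper dismisses as ``clearly'' or ``obvious'' (transitivity of $G$ on $\frA(A)$, the transport argument for the inclusion ``$\supseteq$'' in (\romannumeral3), and the final embedding $C(\td M)\hookrightarrow A$).
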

\begin{proof}
Clearly, the family $\frA(A)$ is $G$-invariant. If $P\in\frA(A)$,
$o\in P$, $g\in G$ and $P\cap gP\neq\emptyset$, then $P\cup gP$ is
a set of antisymmetry; hence, $P=gP=g^{-1}P$. Therefore, $P=\td
Ho$ for a subgroup $\td H\subseteq G$ such that $H\subseteq\td H$.
The set $\td H$ is closed since $P$ is closed. The equality in
(\romannumeral3) is merely the Shilov--Bishop decomposition
theorem (\ref{shibi}) for invariant algebras. Further, $\td A$ is
closed in $C(F)$ by Theorem~13.1, Theorem~12.7, and Lemma~12.3 of
\cite{Gam}. The remaining assertions are obvious.
\end{proof}
By Proposition~\ref{shilb}, real functions in an invariant
function algebra separate maximal sets of antisymmetry; in
general, this is not true.

Let $\wh G$ be the family of all classes of equivalent irreducible
representations of $G$. For an invariant closed linear subspace
$L\subseteq C(M)$, let
\begin{eqnarray*}
\Sp L\subseteq \wh G
\end{eqnarray*}
be the set of the irreducible components (without multiplicities)
of $L$. If $G$ is abelian, then $\wh G=\Hom(G,\bbT)$, where $\bbT$
is the unit circle in $\bbC$ (then $M$ is also an abelian group).
Thus, $\wh G$ may be treated as a subgroup of the multiplicative
group of the Banach algebra $C(M)$. If $G=T\cong\bbT^n$ is a
torus, then there is another natural realization of $\wh
T\cong\bbZ^n$: the mapping $\chi\to -id_e\chi$, where $\chi\in\wh
T$ and $e$ is the identity of $T$, defines an embedding of $\wh T$
into the vector group $\frt'$ as a lattice, which is dual to the
kernel of $\exp$. A closed invariant subspace $A\subseteq C(G)$ is
an algebra if and only if $S=\Sp A$ is a semigroup; $A$ is
antisymmetric if and only if
\begin{eqnarray}\label{antab}
S\cap\ov S=\{\one\},
\end{eqnarray}
where the bar denotes the complex conjugation and $\one$ is the
identity of $\wh G$. The proof is evident.

There is another criterion of antisymmetry which holds for all
bi-invariant function algebras on compact groups.
\begin{theorem}\label{stara}
Let $A$ be a function algebra on a compact group $G$ which is
invariant with respect to left and right translations. The
following conditions are equivalent:
\begin{itemize}
\item[\rm(a)] $A$ is antisymmetric; \item[\rm(b)] the Haar measure
of $G$ is multiplicative on $A$.
\end{itemize}
\end{theorem}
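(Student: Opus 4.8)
The plan is as follows. The implication (b)$\Rightarrow$(a) is the easy direction and needs nothing beyond Lemma~\ref{repan}: the Haar measure of $G$ has support equal to all of $G$, so if it is multiplicative on $A$ then $G$ itself is a set of antisymmetry, which is exactly the statement that $A$ is antisymmetric.

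For (a)$\Rightarrow$(b) I would pass to the Fourier side. By Peter--Weyl, $C(G)=\ov{\bigoplus_{\pi\in\wh G}C(G)_\pi}$, where $C(G)_\pi$ is the span of the matrix coefficients of $\pi$; each $C(G)_\pi$ is isomorphic to $\mathrm{End}(V_\pi)$ and is irreducible as a module over $G\times G$ (left and right translations). Hence a closed bi-invariant subspace $A$ satisfies $A\cap C(G)_\pi\in\{0,C(G)_\pi\}$, so $A=\ov{\bigoplus_{\pi\in\Sp A}C(G)_\pi}$, and by density of $K$-finite vectors in a Banach $K$-module ($K=G\times G$) the algebraic sum $\bigoplus_{\pi\in\Sp A}C(G)_\pi$ of ``trigonometric polynomials lying in $A$'' is dense in $A$ for the sup-norm. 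In this picture $f\mapsto\int_G f\,d\lambda$ extracts the component of $f$ in $C(G)_{\one}=\bbC\one$, and ``$\lambda$ multiplicative on $A$'' means precisely that $\int_G fg\,d\lambda=\big(\int_G f\,d\lambda\big)\big(\int_G g\,d\lambda\big)$ for all $f,g\in A$; since also $\lambda(\one)=1$, this exhibits $\lambda|_A$ as an element of $\cM_A$ represented by the Haar probability measure.

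The heart of the matter is two observations. First, antisymmetry forces $\ov\pi\notin\Sp A$ whenever $\pi\in\Sp A$ and $\pi\neq\one$: otherwise $C(G)_\pi$ and $C(G)_{\ov\pi}$ both lie in $A$, and for a nonzero — hence nonconstant — matrix coefficient $c$ of $\pi$ we would get $c\in A$ and $\ov c\in C(G)_{\ov\pi}\subseteq A$, so $\re c$ and $\im c$ are real elements of $A$, one of them nonconstant, a contradiction. Second, by the orthogonality relations, if $f\in C(G)_\pi$ and $g\in C(G)_\rho$ then $fg$ is a sum of matrix coefficients of the irreducible constituents of $\pi\otimes\rho$, so $\int_G fg\,d\lambda$ is the coefficient of the trivial constituent and vanishes unless $\rho\cong\ov\pi$. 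Combining these for $f=\sum_\pi f_\pi$, $g=\sum_\rho g_\rho$ in $\bigoplus_{\pi\in\Sp A}C(G)_\pi$ gives $\int_G fg\,d\lambda=\sum_{\pi\in\Sp A}\int_G f_\pi g_{\ov\pi}\,d\lambda$, and each term with $\pi\neq\one$ drops out since then $\ov\pi\notin\Sp A$ and hence $g_{\ov\pi}=0$; thus $\int_G fg\,d\lambda=\int_G f_{\one}g_{\one}\,d\lambda=\big(\int_G f\,d\lambda\big)\big(\int_G g\,d\lambda\big)$. Finally the bilinear form $(f,g)\mapsto\int_G fg\,d\lambda-\big(\int_G f\,d\lambda\big)\big(\int_G g\,d\lambda\big)$ on $A\times A$ is (jointly) continuous and vanishes on the dense subspace of trigonometric polynomials in $A$, hence vanishes identically, which is (b).

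The routine ingredients are the Peter--Weyl decomposition and the orthogonality of matrix coefficients. The only step I expect to require a little care is the structural fact that a closed bi-invariant subspace is the closed span of its isotypic components, together with the sup-norm density of the trigonometric polynomials it contains (Fourier series need not converge uniformly, so one passes through an approximate identity of central trigonometric polynomials, which stays inside $A$ because $\tau_\mu A\subseteq A$ for every $\mu\in M(G)$ and has finite spectrum contained in $\Sp A$). After that, antisymmetry enters only through the single fact that $\one\neq\pi\in\Sp A$ implies $\ov\pi\notin\Sp A$.
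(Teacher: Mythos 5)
Your proof is correct, but it is a genuinely different route from the paper's. The paper does not prove Theorem~\ref{stara} directly at all: it refers to \cite{Gi79} and \cite{GiP} for a proof, and observes that the theorem also follows from Theorem~\ref{sphan} applied to $M=G$ viewed as a homogeneous space of $G\times G$ (this is not circular, because the paper's proof of Theorem~\ref{sphan} invokes Theorem~\ref{stara} only for tori, where it reduces to the ``evident'' semigroup condition $S\cap\ov S=\{\one\}$ of (\ref{antab})). Your argument is instead a self-contained Peter--Weyl computation: bi-invariance forces $A$ to be the closed span of full isotypic components $C(G)_\pi$, antisymmetry forces $\Sp A\cap\ov{\Sp A}=\{\one\}$ (the exact noncommutative analogue of (\ref{antab})), and the orthogonality relations then kill every nonconstant term in $\int fg\,d\lambda$. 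All the steps check out, including the two points that need care: the sup-norm density of $\bigoplus_{\pi\in\Sp A}C(G)_\pi$ in $A$ via a central approximate identity (which stays in $A$ because $\tau_\mu A\subseteq A$), and the fact that a nonzero matrix coefficient of a nontrivial $\pi$ is nonconstant (it is $L^2$-orthogonal to the constants). What your approach buys is a direct, elementary, and reference-free proof of the bi-invariant case; what the paper's route buys is economy --- it only ever needs the abelian instance of this spectral argument, and obtains the general group case as a corollary of the much stronger statement for arbitrary commutative homogeneous spaces, which cannot be reached by your method since for general $G/H$ the isotypic components are not irreducible under the transitive group and no analogue of the clean spectral condition is available.
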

For a proof, see \cite{Gi79} or \cite{GiP}; also,
Theorem~\ref{stara} follows from Theorem~\ref{sphan} below whose
proof uses Theorem~\ref{stara} only for tori
(Corollary\ref{mults}).

Let $N=N_G(H)$ be the normalizer of $H$ in $G$. The centralizer
$\td Z$ of $\tau(G)$ in $\Diff(M)$ may be identified with the
group $N/H$ acting in $M$ by right translations $x\to xg$ in $G$:
\begin{eqnarray}\label{deftz}
N/H\cong\td Z=Z_{\Diff(M)}(\tau(G))\subset\Diff(M).
\end{eqnarray}
Note that $\td Z$ and its identity component $\td Z^0$ are compact
Lie groups acting on $M$. Thus, the action $\tau$ can be extended
onto the group $\td G=G\times\td Z^0$. Clearly, $\td G$ preserves
the isotypical components of $\tau$.  For the multiplicity free
spaces, the Schur lemma implies the following stronger
(well-known) assertion.
\begin{lemma}\label{zinva}
If $M$ is commutative, then any $G$-invariant closed subspace of
$C(M)$ is $\td Z$-invariant and $\td G$-invariant. The group $\td
Z$ is abelian. \qed
\end{lemma}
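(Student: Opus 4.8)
The plan is to reduce everything to Schur's lemma applied one $G$-isotypic component at a time, the point being that multiplicity-freeness makes each nonzero isotypic component of $C(M)$ an irreducible $G$-module. First I would fix the Peter--Weyl decomposition: for $\pi\in\wh G$ let $C(M)_\pi$ be the $\pi$-isotypic subspace of the quasiregular representation $\tau$. Since $M$ is commutative, $C(M)_\pi$ is either $0$ or a single irreducible submodule of type $\pi$, hence finite dimensional, and the algebraic sum $C(M)_{\fin}=\bigoplus_\pi C(M)_\pi$ is dense in $C(M)$. If $L\subseteq C(M)$ is closed and $G$-invariant, then $L_\pi:=L\cap C(M)_\pi$ is a $G$-submodule of $C(M)_\pi$, so $L_\pi=0$ or $L_\pi=C(M)_\pi$; moreover $L_{\fin}:=\bigoplus_\pi L_\pi$ is dense in $L$, by the standard density theorem for a strongly continuous representation of a compact group on a Banach space applied to $\tau|_L$ (concretely, the isotypic projections, which are convolutions with characters, map $L$ into itself).

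Next I would bring in $\td Z$. Each $z\in\td Z$ acts on $C(M)$ by the isometric algebra automorphism $U_z\colon f\mapsto f\circ z$, and by the definition (\ref{deftz}) of $\td Z$ these operators commute with all $\tau_g$; since $\td Z$ is a compact Lie group acting on $M$, $z\mapsto U_z$ is strongly continuous. Hence $U_z$ preserves each $C(M)_\pi$, and there it commutes with an irreducible action of $G$, so by Schur's lemma $U_z|_{C(M)_\pi}=\chi_\pi(z)\,I$ for a scalar $\chi_\pi(z)$, necessarily of modulus $1$ because $U_z$ is an isometry of the finite dimensional space $C(M)_\pi$. Consequently $U_z$ preserves $L_{\fin}=\bigoplus_\pi L_\pi$, and by continuity of $U_z$ it preserves the sup-norm closure $L$. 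Thus $L$ is $\td Z$-invariant; being also $G$-invariant and $\td Z^0$-invariant (as $\td Z^0\subseteq\td Z$), it is invariant under $\td G=G\times\td Z^0$.

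Finally, for commutativity of $\td Z$: for $z_1,z_2\in\td Z$ the operators $U_{z_1},U_{z_2}$ act on each $C(M)_\pi$ by the scalars $\chi_\pi(z_1),\chi_\pi(z_2)$, which commute, so $U_{z_1}U_{z_2}$ and $U_{z_2}U_{z_1}$ agree on the dense subspace $C(M)_{\fin}$ and hence on all of $C(M)$. Since continuous functions separate points of $M$, a diffeomorphism is determined by the automorphism it induces on $C(M)$, so $z\mapsto U_z$ is injective and $z_1z_2=z_2z_1$. I expect the only ingredient that is not pure bookkeeping to be the density of $L_{\fin}$ in $L$, i.e. the passage from $G$-finite vectors back to the sup-norm closure; granting that standard fact, the rest is an immediate consequence of multiplicity-freeness through Schur's lemma. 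Indeed the lemma is essentially the statement that, for a multiplicity free $M$, the commutant of $\tau(G)$ acting on $C(M)_{\fin}$ is the commutative algebra generated by the $\td Z$-action.
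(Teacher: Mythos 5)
Your proof is correct and is exactly the argument the paper intends: the lemma is stated there without proof, with only the remark that it is well known and follows from Schur's lemma, and your decomposition into multiplicity-one isotypic components on which every element of $\td Z$ acts as a scalar is precisely that argument spelled out. The one auxiliary fact you invoke --- density of the $G$-finite vectors $L_\fin=\bigoplus_\pi L_\pi$ in a closed invariant subspace $L$ --- is standard and is all that is needed to pass from $C(M)_\fin$ back to $C(M)$.
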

Thus, if $M$ is commutative, then we may extend the transitive
group so that
\begin{eqnarray}\label{zintg}
\tau(Z^0)=\td Z^0, 
\end{eqnarray}
where $Z^0$ is the identity component of the centre $Z$ of $G$.
Furthermore, (\ref{zintg}) is true if the following condition
holds:
\begin{itemize}
\item[(F)] the group $\td Z$ is finite.
\end{itemize}
In the following theorem,  $M$ is not assumed to be commutative
(\cite{La99}, \cite{GL}).
\begin{theorem}\label{byvsh}
$M$ satisfies {\rm(F)} if and only if every invariant function
algebra on $M$ is self-conjugate with respect to the complex
conjugation.\qed
\end{theorem}

For a $G$-invariant function linear space $L$ on $M$, let $L_\fin$
be the set of all functions in $L$ whose translates linearly
generate a finite dimensional invariant subspace of $L$.

Let $V$ be a finite dimensional complex linear space, $\cP(V)$ be
the algebra of all (holomorphic) polynomials on $V$. For a compact
$K\subset V$, let $P(K)$ be the closure of $\cP(V)|_K$ in $C(K)$.
If $A=P(K)$, then $\cM_A$ may be identified with the {\it
polynomially convex hull} $\wh K$ of $K$, which is defined as
\begin{eqnarray}\label{defph}
\wh K=\{z\in V:\,|p(z)|\leq\sup_{\ze\in K}|p(\ze)|\quad\mbox{for
all}\quad p\in\cP(V)\}.
\end{eqnarray}
If $\wh K=K$, then $K$ is called {\it polynomially convex}. The
hull can also be defined for function algebras. Let $A$ be a
function algebra on $Q$, $K$ be a closed subset of $Q$. Then
\begin{eqnarray}\label{defah}
\wh K=\{\vf\in\cM_A:\,|\vf(f)|\leq\sup_{q\in K}|f(q)|\quad\mbox{for
all}\quad f\in A\}
\end{eqnarray}
is the {\it $A$-hull} of $K$; if $\wh K=K$, then $K$ is
$A$-convex.

If $L$ is a complex homogeneous space of $G^\bbC$, then $M$ is its
{\it real form} if it is a $G$-orbit which coincides with the set
of all fixed points of some antiholomorphic involution that
commute with $G$.

\section{A criterion of antisymmetry}
In this section, we assume {\rm(\ref{zintg})} if the contrary is
not stated explicitly.  Let $\si$ be the Haar measure of $Z^0$.
\begin{lemma}\label{pacom}
Suppose $A$ antisymmetric. Let $T\subseteq G$ be a closed subgroup
and $\nu$ be its Haar measure. If $T\supseteq Z^0$, then $\tau_\nu
A=\bbC$.
\end{lemma}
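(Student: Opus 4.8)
The plan is to show that $\tau_\nu A$ is a $G$-invariant function algebra on $M$ which is \emph{both} antisymmetric and self-conjugate, whence it must equal $\bbC$. First observe that since $T\supseteq Z^0$, the measure $\nu$ is bi-invariant under $Z^0$: indeed $Z^0$ is central in $G$, so left and right translations by $Z^0$ on $T$ agree, and $\nu$ is $T$-invariant. Consequently $\tau_\nu$ is an idempotent operator whose range $\tau_\nu A$ consists exactly of the $\tau(T)$-fixed vectors in $A$; being the image of an algebra homomorphism's ``averaging'' it is a closed subalgebra, and it is $G$-invariant because $T$, containing the centre component, is normal enough — more precisely, $\tau_\nu\tau_g = \tau_g\tau_{\nu'}$ where $\nu'$ is the Haar measure of the (possibly different, but still $Z^0$-containing) conjugate subgroup $gTg^{-1}$; one checks $\tau_{\nu'}A\subseteq\tau_\nu A$ using that $A$ is $\td Z$-invariant by Lemma~\ref{zinva} and $\tau(Z^0)=\td Z^0$ by \eqref{zintg}, so the right-translation action of $\td Z$ intertwines the various averagings. (This is the step I expect to require the most care: identifying $\tau_\nu A$ as genuinely $G$-invariant, not merely $N_G(T)$-invariant, using commutativity of $M$.)

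Next, $\tau_\nu A$ is antisymmetric: it is a $G$-invariant subalgebra of the antisymmetric algebra $A$, and any subalgebra of an antisymmetric algebra is antisymmetric. On the other hand, $\tau_\nu A$ is self-conjugate. This is where I would invoke Theorem~\ref{stara} — but only for tori — via the structure of the $\td Z$-action. Since $\tau_\nu A$ consists of $\tau(Z^0)$-fixed functions and $\tau(Z^0)=\td Z^0$ acts on $M$ by right translations through $N/H$, the functions in $\tau_\nu A$ descend to functions on the orbit space $M/\td Z^0$; and the residual action of $\td Z/\td Z^0$, together with the Fourier decomposition of $A$ with respect to the torus $\td Z^0$ (which is abelian, again by Lemma~\ref{zinva}), forces the spectrum $\Sp(\tau_\nu A)$ to be symmetric under complex conjugation. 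Combined with antisymmetry and criterion \eqref{antab} applied on the torus factor, the only possibility is $\Sp(\tau_\nu A)=\{\one\}$, i.e. $\tau_\nu A=\bbC$.

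Alternatively — and this is probably the cleaner route to write up — one argues directly: let $f\in A$ and put $h=\tau_\nu f\in\tau_\nu A$. Since $\tau_\nu A$ is $G$-invariant and antisymmetric, by Proposition~\ref{shilb} the maximal sets of antisymmetry for $\tau_\nu A$ are the fibres of a projection $M\to G/\td H$ with $\td H\supseteq Z^0\cdot H$ (because every $h\in\tau_\nu A$ is $\tau(Z^0)$-invariant, hence constant on $Z^0$-orbits, hence $Z^0\subseteq\td H$). But antisymmetry of $\tau_\nu A$ means this projection has a single fibre, so $\tau_\nu A$ being antisymmetric on all of $M$ while every element factors through $M/Z^0$ gives a contradiction unless $M/Z^0$ is a point relative to $\tau_\nu A$ — i.e.\ unless $\tau_\nu A=\bbC$. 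The honest obstacle in either version is bookkeeping the interaction between the averaging operator $\tau_\nu$, the right $\td Z$-action, and $G$-invariance; everything else is a short deduction from the already-quoted Proposition~\ref{shilb}, Lemma~\ref{zinva}, and Theorem~\ref{stara}.
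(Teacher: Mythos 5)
Your overall skeleton---average over the centre, observe the result is simultaneously antisymmetric and self-conjugate, conclude it is $\bbC$---matches the paper's, but both of the places you yourself flag as ``requiring the most care'' are genuine gaps, and the paper closes them quite differently. First, you need not (and cannot easily) prove that $\tau_\nu A$ is $G$-invariant for a general closed $T\supseteq Z^0$: the asserted containment $\tau_{\nu'}A\subseteq\tau_\nu A$ for the Haar measure $\nu'$ of a conjugate $gTg^{-1}$ has no justification, and $\tau_\nu A$, being the algebra of $T$-invariant functions in $A$, is in general only $N_G(T)$-invariant. The paper sidesteps this entirely by reducing to $T=Z^0$: since $Z^0\subseteq T$ one has $\tau_\nu\tau_\si=\tau_\nu$, so $\tau_\si A=\bbC$ immediately forces $\tau_\nu A=\tau_\nu\bbC=\bbC$; and for $T=Z^0$ centrality makes $G$-invariance of $\tau_\si A$ trivial.

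Second, and more seriously, your self-conjugacy step is not a proof. The assertion that the Fourier decomposition of $A$ with respect to $\td Z^0$ ``forces $\Sp(\tau_\nu A)$ to be symmetric under complex conjugation'' is exactly what needs to be proved, and it fails for invariant algebras on arbitrary homogeneous spaces (that failure is the content of the ``only if'' direction of Theorem~\ref{byvsh}). The paper's actual argument is: $\tau_\si A$ consists of the $Z^0$-invariant functions in $A$, hence may be regarded as an invariant subalgebra of $C(\td M)$ with $\td M=G/HZ^0$; by (\ref{zintg}) and compactness of $G$ this quotient satisfies condition (F); Latypov's Theorem~\ref{byvsh} then yields self-conjugacy of every invariant algebra on $\td M$, in particular of $\tau_\si A$, which together with antisymmetry (inherited from $\tau_\si A\subseteq A$) gives $\tau_\si A=\bbC$. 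You never invoke Theorem~\ref{byvsh} and never verify (F) for the quotient, so the decisive step is missing; your appeal to Theorem~\ref{stara} for tori is also misplaced here, since that result enters only later, in Corollary~\ref{mults}.
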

\begin{proof}
It is sufficient to prove the lemma assuming $T=Z^0$. Indeed,
suppose that  $\tau_\si A=\bbC$. Then, since $\tau_\nu$ preserves
the constant functions, the evident equality $\tau_\nu
\tau_\si=\tau_\nu$ implies $\tau_\nu A=\bbC$. Set
\begin{eqnarray*}
\td H=HZ^0,\qquad\td M=G/\td H.
\end{eqnarray*}
Since $G$ is compact, $\td M$ satisfies (F) due to (\ref{zintg}).
The space $\tau_\si A$ consists of all $Z^0$-invariant functions
in $A$. Therefore, it can be treated as an invariant subalgebra of
$C(\td M)$. By Theorem~\ref{byvsh}, $\tau_\si A$ is
self-conjugate. On the other hand, $\tau_\si A$ is antisymmetric
since $\tau_\si A\subseteq A$. Thus, $\tau_\si A=\bbC$.
\end{proof}
Let $T$ be a torus in $G$ and $\Sp_TA$ be the set of {\it weights}
of $\tau$ for $T$ in $A$:
\begin{eqnarray*}
&\Sp_TA=\{\chi\in\wh T:\,\tau_tf=\chi(t)f~~\mbox{for some}~f\in
A\setminus\{0\}~\mbox{and all}~t\in T\}.
\end{eqnarray*}
Then $f$ is said to be {\it a weight function for $\chi$}. Let
$\eta=\chi\,d\nu$, where $\nu$ is the Haar measure of $T$ and
$\chi$ is a weight. Then $\tau_\eta f$ is a weight function or
zero for every weight $\chi$ and $f\in C(M)$. Since $A_\fin$ is a
dense subalgebra of $A$ and $\tau_\eta f\in A_\fin$ for $f\in
A_\fin$, we may assume that $f\in A_\fin$.
\begin{lemma}
If $M$ is connected or $T=Z^0$, then $\Sp_T A$ is a semigroup.
\end{lemma}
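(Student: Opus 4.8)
The plan is to show that the product of two weight functions, when it is nonzero, is again a weight function, and that for $T = Z^0$ (or $M$ connected) such a product never vanishes identically. Let me treat the two hypotheses in parallel. Suppose $\chi_1,\chi_2\in\Sp_T A$ with weight functions $f_1,f_2\in A$. Since $A$ is an algebra, $f_1 f_2\in A$, and for $t\in T$ we have $\tau_t(f_1f_2) = (\tau_t f_1)(\tau_t f_2) = \chi_1(t)\chi_2(t)\,f_1f_2$, so $f_1f_2$ is a weight function for $\chi_1\chi_2$ \emph{provided} it is not identically zero. Hence the only content of the lemma is the non-vanishing: I must show that two nonzero weight functions on $M$ have a product that does not vanish identically. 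Equivalently, I must rule out that the zero sets of weight functions can "fill up" $M$ badly.

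For $M$ connected, the key is that weight functions are, up to the $T$-action, restrictions of real-analytic functions: each $f\in A_\fin$ lies in a finite-dimensional $G$-invariant (hence $\tau$-invariant) subspace, on which the $G$-action is by a finite-dimensional representation, so $f$ is real-analytic on the real-analytic manifold $M$. A nonzero real-analytic function on a connected real-analytic manifold has nowhere-dense zero set, so $f_1f_2$, being real-analytic and not identically zero on each chart unless both factors vanish there, cannot vanish on all of $M$; more directly, if $f_1f_2\equiv 0$ then by analyticity one of $f_1,f_2$ vanishes on a nonempty open set and hence (connectedness) everywhere, contradicting that it is a nonzero weight function. This handles the connected case. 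For the case $T = Z^0$, note that $Z^0$ acts on $M$ and, by the extension hypothesis (\ref{zintg}), $\tau(Z^0) = \td Z^0$ is the identity component of the centralizer $\td Z$ of $\tau(G)$ in $\Diff(M)$; a weight function for a character of $Z^0$ is, when decomposed over the (possibly disconnected) $M$, supported on a union of connected components on which it is a nonzero weight function for the connected torus $Z^0$, and one applies the analytic argument componentwise, using that $\td Z^0$ permutes components transitively within each $G$-orbit of components so that a nonzero weight function cannot be killed by multiplication by another.

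Concretely, here is the cleanest route for $T = Z^0$. Let $f_1, f_2$ be nonzero weight functions for characters $\chi_1, \chi_2$ of $Z^0$. Decompose $M = \bigsqcup_j M_j$ into connected components; $G$ permutes the $M_j$, and since $Z^0$ is connected it preserves each $M_j$. On each $M_j$ the restrictions $f_i|_{M_j}$ are real-analytic and are weight functions (or zero) for $\chi_i$. If $(f_1f_2)|_{M_j}\equiv 0$, then by connectedness and analyticity $f_1|_{M_j}\equiv 0$ or $f_2|_{M_j}\equiv 0$. So the zero component set of $f_1f_2$ contains, for each $j$, the whole of $\{f_1|_{M_j}\equiv 0\}$ or $\{f_2|_{M_j}\equiv 0\}$. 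It remains to exclude $f_1f_2\equiv 0$ on all of $M$: if so, partition the index set into $J_1 = \{j : f_1|_{M_j}\equiv 0\}$ and its complement $J_2\subseteq\{j : f_2|_{M_j}\equiv 0\}$; since $f_1\not\equiv 0$, $J_2\neq\emptyset$, and since $G$ acts by permuting components while commuting with the weight structure, $J_1$ and $J_2$ are both $G$-stable — but then $\tau_g$ for a suitable $g$ moving a component of $J_2$ to a component where $f_1\neq 0$ gives a contradiction with $G$-invariance of the property "$f_1$ vanishes on this component," unless $J_2$ is empty or all of the index set; the former contradicts $f_1\not\equiv 0$, the latter makes $f_2\equiv 0$. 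Either way we reach a contradiction, so $f_1f_2\not\equiv 0$ and $\chi_1\chi_2\in\Sp_T A$.

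The main obstacle is precisely this non-vanishing step; the algebraic closure of weights under addition is trivial. I expect the delicate point to be the bookkeeping over connected components in the case $M$ disconnected, where one must use that $G$ (allowed to be non-connected here) permutes components and that this permutation action is compatible with the $Z^0$-weight decomposition, so that the "bad" sets $J_1, J_2$ of components are $G$-invariant and hence, by transitivity of $G$ on the set of components meeting a fixed $G$-orbit, cannot be proper and nonempty simultaneously in a way that kills a nonzero function. In the connected case the argument is immediate from real-analyticity. One should also record that $\one\in\Sp_T A$ always (constants lie in $A$), so $\Sp_T A$ is a submonoid, which is what "semigroup" means here.
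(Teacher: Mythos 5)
Your reduction of the lemma to a non-vanishing statement, and your treatment of the connected case via real-analyticity of functions in $A_\fin$, are correct and essentially the paper's argument (the paper first reduces to $f\in A_\fin$ using the projections $\tau_\eta$, a step you should also record, since a weight function in $A$ is not a priori in $A_\fin$). The problem is in the case $T=Z^0$ with $M$ disconnected. There you try to prove that for \emph{any} two nonzero weight functions $f_1,f_2$ the product $f_1f_2$ is not identically zero, and the key step is the claim that $J_1=\{j:\,f_1|_{M_j}\equiv0\}$ and $J_2$ are $G$-stable. That claim is false: what is $G$-invariant is the \emph{space} of weight functions for a fixed character $\chi$ of the central torus $Z^0$ (because $\tau_z\tau_g=\tau_g\tau_z$ for central $z$), not the set of components on which one particular weight function vanishes; $\tau_g f_1$ is a different weight function and may vanish on a different union of components. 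Moreover the statement you are aiming at is genuinely stronger than the lemma and can actually fail: if $f_1$ vanishes exactly on a union of components $M'$ and $f_2$ is a $G$-translate of a weight function vanishing exactly on $M\setminus M'$, then $f_1f_2\equiv0$ even though both are nonzero weight functions. So your concluding ``either way we reach a contradiction'' does not go through.

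The repair is the $G$-invariance you half-invoke, used in the right place: fix one component $M_0$; since the space of weight functions for $\chi_i$ is $G$-invariant and $G$ permutes the components of $M$ transitively, each $\chi_i\in\Sp_{Z^0}A$ admits a weight function $f_i$ with $f_i|_{M_0}\neq0$; then the connected-case analyticity argument applied on $M_0$ gives $f_1f_2|_{M_0}\neq0$, so $f_1f_2$ is a nonzero weight function for $\chi_1\chi_2$. This is exactly the paper's proof: the set of weights admitting a weight function not vanishing on a fixed $M_0$ is a semigroup, and for $T=Z^0$ this set is independent of $M_0$ and coincides with $\Sp_{Z^0}A$.
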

\begin{proof}
Let $M_0$ be a component of $M$. Suppose that $f_1,f_2\in A_\fin$
and $f_1,f_2\neq0$ on $M_0$. Then $f_1f_2\neq0$ on $M_0$ since the
functions are real-analytic. If $f_1,f_2$ are weight functions for
$\chi_1,\chi_2$, respectively, then $f_1f_2$ is a weight function
for $\chi_1\chi_2$. Therefore, the family of such weights is a
semigroup. This proves the lemma for connected $M$. If $T=Z^0$,
then, for any weight $\chi$, the space of all weight functions for
$\chi$ is $G$-invariant. Hence, the semigroup above is independent
of the choice of $M_0$ and coincides with $\Sp_TA$.
\end{proof}
In general, $\Sp A$ is a finite union of semigroups. Let $B_T$ be
the closed linear span of $\Sp_TA$ in $C(T)$.
\begin{corollary}
If $M$ is connected or $T=Z^0$, then $B_T$ is an invariant
function algebra on $T$.\qed
\end{corollary}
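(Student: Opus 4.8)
The plan is to deduce the corollary directly from the preceding lemma, using the Fourier-analytic description of translation-invariant subalgebras of $C(T)$ for a torus $T$ that was recorded earlier. By construction $B_T$ is the closed linear span of a set of characters of $T$, hence it is automatically a closed subspace of $C(T)$ that is invariant under translations (characters are eigenfunctions of translation). So the only points left to verify are that $B_T$ contains the constants and that it is closed under pointwise multiplication.

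For the first point I would observe that $\one\in A$ and $\tau_t\one=\one$ for every $t\in T$, so $\one$ is a weight function for the trivial character; thus the trivial character lies in $\Sp_TA$, and therefore $\one\in B_T$. For the second point I would invoke the criterion noted earlier: for a compact abelian group, a closed translation-invariant subspace of the algebra of continuous functions is a subalgebra if and only if its set of irreducible components is a semigroup. Here $\Sp B_T=\Sp_TA$ — since the characters form an orthonormal basis of $L^2(T)$ and finite linear combinations of characters are dense in $C(T)$, the continuous projection onto any character vanishes on $B_T$ unless that character belongs to $\Sp_TA$ — and the preceding lemma says precisely that $\Sp_TA$ is a semigroup when $M$ is connected or $T=Z^0$. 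Hence $B_T$ is a subalgebra, and combined with the previous paragraph it is a translation-invariant closed subalgebra of $C(T)$ containing the constants, i.e., an invariant function algebra on $T$.

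I do not expect any genuine obstacle: the statement is an immediate consequence of the preceding lemma together with the elementary characterization of translation-invariant subalgebras of $C(T)$, which is why the corollary is stated without a separate proof. The only items that deserve a word of care are the identity $\Sp B_T=\Sp_TA$ and the presence of the trivial character, and both are routine consequences of the definitions and of Fourier analysis on the torus.
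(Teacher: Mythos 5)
Your argument is correct and is exactly the intended one: the paper states the corollary with no proof because it follows immediately from the preceding lemma together with the earlier remark that a closed translation-invariant subspace of $C$ of a compact abelian group is an algebra if and only if its spectrum is a semigroup. Your two supplementary checks (that the trivial character lies in $\Sp_TA$ because $\one\in A$, and that $\Sp B_T=\Sp_TA$) are the right routine verifications and present no difficulty.
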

\begin{lemma}\label{ancon}
If $A$ is antisymmetric, then $B_{Z^0}$ is antisymmetric.
\end{lemma}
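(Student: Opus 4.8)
The plan is to reduce the antisymmetry of $B_{Z^0}$ to that of $A$ by transporting the algebraic obstruction to antisymmetry from $C(T)$ back into $C(M)$, where $T=Z^0$. Recall that $B_{Z^0}$ is the closed linear span in $C(T)$ of the semigroup $S=\Sp_TA\subseteq\wh T$, and by the abelian criterion \eqref{antab} it suffices to show that $S\cap\ov S=\{\one\}$. So suppose $\chi\in S\cap\ov S$; I must show $\chi=\one$. Let $f_1\in A_\fin$ be a nonzero weight function for $\chi$ and $f_2\in A_\fin$ a nonzero weight function for $\ov\chi=\chi^{-1}$.

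The key step is to produce from $f_1,f_2$ a nonconstant real-valued function in $A$, contradicting antisymmetry of $A$, unless $\chi=\one$. First I would observe that, because $T=Z^0$ lies in the centre of $G$, the space of weight functions for any fixed $\chi\in\wh T$ is a $G$-invariant (indeed $\td G$-invariant, by Lemma~\ref{zinva}) closed subspace of $C(M)$; in particular $\ov{f_1}$ is a weight function for $\ov\chi$, and $f_1\ov{f_1}\in A\cdot\ov A$ is $T$-invariant. To stay inside $A$, the natural candidate is the product $f_1 f_2$: since $f_1$ is a weight function for $\chi$ and $f_2$ for $\chi^{-1}$, the product $f_1f_2\in A_\fin$ is $T$-invariant, i.e.\ $\tau_t(f_1f_2)=f_1f_2$ for all $t\in T=Z^0$. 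By Lemma~\ref{pacom} applied with this $T=Z^0$ (using that $A$ is antisymmetric), every $Z^0$-invariant function in $A$ is constant; hence $f_1f_2\equiv c$ for some constant $c$. If $c\neq0$, then $f_1$ never vanishes on $M$, so $|f_1|$ is a nonvanishing continuous function, and $f_1\ov{f_1}=|f_1|^2$ is a $T$-invariant nonnegative function. Here I would use the other half of Lemma~\ref{zinva}: $A$ is $\td Z$-invariant, so $\ov A$ is as well, but more to the point I want a real function \emph{in $A$}.

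A cleaner route: since $f_1$ is a weight function for $\chi$ on a connected component and $f_1f_2=c\neq0$, we get $f_2=c/f_1$, so $1/f_1\in A$ as well (it is the uniform limit of polynomials in $f_1,f_2$ on the compact $M$, or directly $c^{-1}f_2\in A$). Then $g:=f_1+\ov c\,\ov{f_1}^{-1}\cdot(\text{something})$—rather, consider that $f_1$ and $1/f_1$ both lie in $A$, both are weight functions (for $\chi$ and $\chi^{-1}$), so for any integer $n$ the function $f_1^n + f_1^{-n}$ would be $T$-invariant if... no: $f_1^n$ has weight $\chi^n$. The correct symmetric object: since $f_1\in A$ and $f_2=c/f_1\in A$, and $f_2$ has weight $\chi^{-1}=\ov\chi$, the function $f_1/\overline{f_1}$ has weight $\chi^2$. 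If $\chi\neq\one$, pick $t\in T$ with $\chi(t)\neq1$; I claim $A$ then contains a nonconstant real function, contradiction. The main obstacle is exactly this last inference—extracting a genuine real function in $A$ from the vanishing-free weight functions. I expect the intended argument is shorter: since $f_1f_2=c$ with $f_1$ a $\chi$-weight function, $|f_1|=|c|^{1/2}|f_2|^{-1}\cdot|f_2|^{1/2}$... Instead, apply Lemma~\ref{pacom} once more: $f_1\ov f_1$ is $Z^0$-invariant and continuous, but need not be in $A$. So the real content is that $|f_1|$ is constant (since $f_1f_2=c$ forces $f_1$ to be nonvanishing and the previous lemmas on $B_T$ force a torus-group structure on weights), whence $\ov{f_1}=\bar c\,|f_1|^{-2}f_1^{-1}\cdot|f_1|^2$ is a scalar multiple of $f_2\in A$; thus $A$ is self-conjugate on the $G$-orbit of $f_1$, and antisymmetry of $A$ forces $f_1$ constant, i.e.\ $\chi=\one$. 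I would write this final step carefully, as it is where the hypothesis "$A$ antisymmetric" is used a second time and where the connectedness remarks from the preceding lemma enter.
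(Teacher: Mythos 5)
Your reduction to showing $S\cap\ov S=\{\one\}$ via (\ref{antab}) and your first step --- the product of a $\chi$-weight function and a $\ov\chi$-weight function is $Z^0$-invariant, hence constant by Lemma~\ref{pacom} --- coincide with the paper's opening moves. But the proposal has two genuine gaps. First, you never rule out $f_1f_2\equiv 0$: since $M$ may be disconnected, two nonzero weight functions can have identically vanishing product, and your argument then says nothing. The paper avoids this by replacing the single pair $f_1,f_2$ with nontrivial $G$-irreducible subspaces $U,V\subseteq A$ consisting of weight functions for $\chi$ and $\ov\chi$ (possible because $Z^0$ is central, so the weight relation is preserved by $G$-translation), and observing that $UV\neq0$: the common zero set of a nonzero $G$-invariant subspace is a proper closed $G$-invariant subset of the transitive $G$-space $M$, hence empty, so some product $uv$ is a nonzero constant.

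Second, and more seriously, the final step --- producing a nonconstant real function in $A$ --- is exactly where your argument dissolves, as you yourself concede. Your claim that $|f_1|$ is constant (and hence that $\ov{f_1}$ is a scalar multiple of $f_2\in A$) is unsupported: the weight condition for the central torus $Z^0$ alone constrains only the $Z^0$-dependence of $f_1$ and gives no control of $|f_1|$ transverse to the $Z^0$-orbits. The paper's resolution is a dimension count: since $uv$ is a nonzero constant, both $u$ and $v$ are nonvanishing on $M$, and $\dim(UV)\leq 1$ then forces $\dim U=\dim Uv=1$ and $\dim V=\dim uV=1$. Thus $\chi$ and $\ov\chi$ extend to one-dimensional characters $\chi_1$ and $\ov{\chi_1}$ of all of $G$, so $u(gx)=\chi_1(g)u(x)$ and $v(gx)=\ov{\chi_1(g)}v(x)$; normalizing $u(o)=v(o)=1$ and using transitivity of $G$ gives $u=\ov v$, whence $\ov u\in A$, and antisymmetry forces $u=v=\one$ and $\chi=\one$. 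Without some substitute for this passage from a $Z^0$-weight to a global $G$-character, the proposal does not close.
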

\begin{proof}
It is sufficient to prove  (\ref{antab}) for $\Sp_{Z^0}A$. If
(\ref{antab}) is false, then there exist
$\chi\in\Sp_{Z^0}A\setminus\{\one\}$  and $u,v\in A\setminus\{0\}$
such that for all $z\in {Z^0}$ and $x\in M$
\begin{eqnarray}\label{uvchi}
u(zx)=\chi(z)u(x),\quad  v(zx)=\ov{\chi(z)}v(x).
\end{eqnarray}
Clearly, (\ref{uvchi}) is true for all $G$-translates of $u,v$.
Therefore, there exist nontrivial $G$-irreducible subspaces
$U,V\subseteq A$ such that (\ref{uvchi}) holds for all $u\in U$,
$v\in V$. The product $uv$ is $Z^0$-invariant; by
Lemma~\ref{pacom}, $uv\in\bbC$. This implies $\dim (UV)\leq1$.
Since $U$ and $V$ are $G$-invariant and nontrivial, $UV\neq0$
(hence, $\dim(UV)=1$). Let $u\in U$ and $v\in V$ be such that
$uv\neq0$. Since $uv\in\bbC$, this implies that $u(x),v(x)\neq0$
for all $x\in M$. It follows that $\dim U=\dim Uv=1$ and $\dim
V=\dim uV=1$. The corresponding one dimensional characters
$\chi_1$ and $\chi_2$ extend $\chi$ and $\ov\chi$, respectively,
onto $G$. Since $uv\in\bbC$, $\chi_1\chi_2=\one$; hence,
$\chi_1=\ov\chi_2$. Therefore, for all $g\in G$ and $x\in M$
\begin{eqnarray}\label{uvchi1}
u(gx)=\chi_1(g)u(x),\quad  v(gx)=\ov{\chi_1(g)}v(x).
\end{eqnarray}
Assuming in addition that $u(o) = v(o) = 1$, we see from
(\ref{uvchi1}) that $u = \ov{v}$. Since $A$ is antisymmetric, we
get $u=v=\one$. Then $\chi=\one$ contradictory to the assumption.
\end{proof}
\begin{corollary}\label{mults}
The operator $\tau_\si:\, A\to\bbC$ is a homomorphism.
\end{corollary}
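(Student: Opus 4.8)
The plan is to show that $\tau_\si$ respects products, i.e.\ $\tau_\si(fg)=(\tau_\si f)(\tau_\si g)$ for all $f,g\in A$. Since $\tau_\si A=\bbC$ by Lemma~\ref{pacom} (applied with $T=Z^0$), each value $\tau_\si f$ is a scalar, so the claim is that $\tau_\si$ is multiplicative as a functional; being also linear, unital (it fixes constants) and of norm $1$, it will then be an element of $\cM_A$, and in particular $\si$ will be a multiplicative measure on $A$.

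First I would reduce to functions in $A_\fin$: this subalgebra is dense in $A$, both $\tau_\si$ and multiplication are continuous, so it suffices to check multiplicativity on $A_\fin$, and there one may further decompose into $Z^0$-weight components. Concretely, for a weight $\chi\in\Sp_{Z^0}A$ set $\eta_\chi=\chi\,d\si$; then $\tau_{\eta_\chi}$ projects $A_\fin$ onto the space of weight functions for $\chi$, we have $\sum_{\chi}\tau_{\eta_\chi}=\mathrm{id}$ on $A_\fin$ (a finite sum on each element), and $\tau_\si=\tau_{\eta_{\one}}$ picks out the $Z^0$-invariant part. Thus for $f,g\in A_\fin$,
$$
\tau_\si(fg)=\sum_{\chi}\tau_{\eta_\chi}f\cdot\tau_{\eta_{\ov\chi}}g,
$$
since the product of a $\chi$-weight function and a $\psi$-weight function is a $\chi\psi$-weight function, and it is $Z^0$-invariant exactly when $\psi=\ov\chi$. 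So everything comes down to controlling the cross terms $\tau_{\eta_\chi}f\cdot\tau_{\eta_{\ov\chi}}g$ for $\chi\neq\one$.

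The key point — and the main obstacle — is to show each such cross term vanishes when $A$ is antisymmetric. Here I would reuse the mechanism of Lemma~\ref{ancon}: $\tau_{\eta_\chi}f$ lies in a finite-dimensional $G$-invariant subspace $U\subseteq A$ all of whose elements satisfy $u(zx)=\chi(z)u(x)$, and likewise $\tau_{\eta_{\ov\chi}}g$ lies in such a subspace $V$ with weight $\ov\chi$; the product $UV$ is $Z^0$-invariant, hence by Lemma~\ref{pacom} consists of constants, so $\dim(UV)\le 1$. If $UV\neq 0$ the argument of Lemma~\ref{ancon} forces the weights to lift to global characters $\chi_1,\ov{\chi_1}$ of $G$ with $u=\ov v$ (after normalization), and antisymmetry of $A$ then gives $u=v=\one$, whence $\chi=\one$ — a contradiction. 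Therefore $UV=0$, so $\tau_{\eta_\chi}f\cdot\tau_{\eta_{\ov\chi}}g=0$ for every $\chi\neq\one$, and the displayed sum collapses to $\tau_\si f\cdot\tau_\si g$. The only care needed is that $U$ and $V$ genuinely can be taken $G$-invariant (take the $G$-span of the weight function, which stays inside $A_\fin$ and keeps the $Z^0$-weight) and that the normalization $u(o)=v(o)=1$ in Lemma~\ref{ancon} is legitimate, which holds because, as shown there, a nonzero element of such a $U$ vanishes nowhere on $M$.
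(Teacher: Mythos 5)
Your argument is correct and is essentially the paper's own proof with the torus case unwound: the paper deduces from Lemma~\ref{ancon} and Theorem~\ref{stara} (applied to the translation-invariant algebra $B_{Z^0}$ on the torus $Z^0$) that $\si$ is multiplicative on $B_{Z^0}$, and then transfers this to $A$ via the matrix elements $f_x(z)=\tau_zf(x)$; your $Z^0$-weight decomposition with vanishing cross terms is exactly the content of that torus case. One cosmetic remark: you need not re-run the mechanism of Lemma~\ref{ancon} for the cross terms, since its conclusion, $\Sp_{Z^0}A\cap\ov{\Sp_{Z^0}A}=\{\one\}$, already forces one of the two factors $\tau_{\eta_\chi}f$, $\tau_{\eta_{\ov\chi}}g$ to vanish whenever $\chi\neq\one$.
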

\begin{proof}
By Lemma~\ref{ancon} and Theorem~\ref{stara}, $\si$ is
multiplicative on the algebra $B_{Z^0}$. Clearly, $B_{Z^0}$
contains all matrix elements $f_x(z)=\tau_zf(x)$, where $f\in A$,
$x\in M$, and $z\in {Z^0}$. Integrating on $z$, we get
$\tau_\si(f_1f_2)=\tau_\si(f_1)\tau_\si(f_2)$ for all $f_1,f_2\in
A$.
\end{proof}

We do not assume (\ref{zintg}) in the statements of the following
proposition and theorem.
\begin{proposition}\label{muime}
Let $A$ be an antisymmetric invariant function algebra on a
commutative homogeneous space $M$ and $\td Z^0$ be the identity
component of the group $\td Z$ defined by (\ref{deftz}). Then each
$\td Z^0$-invariant probability measure $\eta$ on $M$ is
multiplicative on $A$.
\end{proposition}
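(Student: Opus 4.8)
The plan is to reduce to the case when hypothesis (\ref{zintg}) holds and then to read off the required multiplicative functional from Corollary~\ref{mults}. First I would enlarge the acting group: put $\td G=G\times\td Z^0$, acting on $M$ through $\tau$ on the first factor and through the inclusion $\td Z^0\subseteq\Diff(M)$ on the second; this action is transitive. Since $M$ is commutative, $\td Z$ is abelian by Lemma~\ref{zinva}, so $\td Z^0$ is central in $\td G$; moreover $\tau(Z(G)^0)$ centralizes $\tau(G)$, hence lies in $\td Z$ and, being connected, in $\td Z^0$. Consequently $Z(\td G)^0=Z(G)^0\times\td Z^0$ and $\tau(Z(\td G)^0)=\td Z^0$, which is exactly condition (\ref{zintg}) for $\td G$ (note also that the centralizer of $\tau(\td G)$ in $\Diff(M)$ is still $\td Z$, since $\td Z$ is abelian). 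By Lemma~\ref{zinva} the algebra $A$ is $\td G$-invariant; it remains antisymmetric, and $M$ stays a commutative homogeneous space of $\td G$ (each $G$-irreducible summand of $C(M)$ is $\td Z^0$-invariant and hence, by Schur's lemma, $\td G$-irreducible, while inequivalent $G$-summands stay inequivalent). Thus all the results of this section apply to $(\td G,M,A)$.

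Let $\si$ be the Haar measure of $\td Z^0$. Applying Corollary~\ref{mults} to $\td G$ and using $\tau(Z(\td G)^0)=\td Z^0$, the averaging operator occurring there coincides with $\tau_\si$ (the push-forward under $\tau$ of the Haar measure of $Z(\td G)^0$ is the Haar measure of $\td Z^0$), so $\vf_0:=\tau_\si\colon A\to\bbC$ is a homomorphism. Since $\vf_0(\one)=1$, it is a nonzero multiplicative functional, i.e. $\vf_0\in\cM_A$, and $\tau_\si f=\vf_0(f)\,\one$ for every $f\in A$.

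It then remains a short averaging argument. Let $\eta$ be a $\td Z^0$-invariant probability measure on $M$ and let $f\in A$. For each $w\in\td Z^0$ the invariance of $\eta$ gives $\int_M\tau_w f\,d\eta=\int_M f\,d\eta$; integrating over $w\in\td Z^0$ against $\si$ and applying Fubini's theorem,
\begin{eqnarray*}
\int_M f\,d\eta &=& \int_{\td Z^0}\Big(\int_M\tau_w f\,d\eta\Big)\,d\si(w)
 = \int_M\Big(\int_{\td Z^0}\tau_w f\,d\si(w)\Big)\,d\eta\\
 &=& \int_M(\tau_\si f)\,d\eta = \vf_0(f),
\end{eqnarray*}
because $\tau_\si f=\vf_0(f)\,\one$ and $\eta(M)=1$. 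Thus $\eta$ is a probability measure whose restriction to $A$ is $\vf_0$, hence a representing measure for $\vf_0\in\cM_A$; that is, $\eta\in\cM_{\vf_0}$, so $\eta$ is multiplicative on $A$.

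The argument has no serious difficulty once the reduction is in place; the only step needing care is the passage to $\td G$ — checking that (\ref{zintg}) holds there, that $A$ and $M$ retain the properties used, and that the operator of Corollary~\ref{mults} for $\td G$ is precisely integration against the Haar measure of $\td Z^0$. After that, the computation is a routine application of Fubini's theorem on a product of compact spaces together with the fact that $\tau_\si f$ is a constant function.
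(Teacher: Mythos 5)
Your proof is correct and follows essentially the same route as the paper: reduce to hypothesis (\ref{zintg}) by enlarging the group via Lemma~\ref{zinva}, identify $\int f\,d\eta$ with $\tau_\si f$ by averaging over the central torus, and conclude with Corollary~\ref{mults}. The only difference is that you spell out the reduction to $\td G=G\times\td Z^0$ in detail where the paper compresses it into one sentence.
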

\begin{proof}
By Lemma~\ref{zinva}, we may assume (\ref{zintg}). Since $\eta$ is
$Z^0$-invariant, we have $\int f\,d\eta=\int \tau_zf\,d\eta$ for
all $z\in Z^0$, $f\in A$. By Lemma~\ref{pacom}, $\int
f\,d\eta=\tau_\si f$. Thus, the proposition follows from
Corollary~\ref{mults}.
\end{proof}

\begin{theorem}\label{sphan}
Let $G$ be a compact Lie group, $M=G/H$ be a commutative
homogeneous space, $A$ be an invariant function algebra on $M$,
and $\mu$ be the invariant probability measure on $M$. The
following assertions are equivalent:
\begin{itemize}
\item[\rm (1)] $A$ is antisymmetric; \item[\rm (2)] $\mu$ is
multiplicative on $A$; \item[\rm (3)] there is a $G$-fixed point
in $\cM_A$.
\end{itemize}
Moreover, the $G$-fixed point is unique.
\end{theorem}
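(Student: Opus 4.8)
The plan is to close the cycle $(1)\Rightarrow(2)\Rightarrow(3)\Rightarrow(1)$, using the machinery already assembled. The implication $(1)\Rightarrow(2)$ is essentially Proposition~\ref{muime}: the invariant probability measure $\mu$ is in particular $\td Z^0$-invariant (indeed it is $\td G$-invariant, since $\td Z^0$ preserves $\mu$ because $\mu$ is the unique $G$-invariant probability measure and $\td Z^0$ commutes with $G$), so by Proposition~\ref{muime} it is multiplicative on $A$. Thus there is nothing new to do here beyond checking the invariance of $\mu$ under $\td Z^0$.

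For $(2)\Rightarrow(3)$: suppose $\mu\in\cM_\vf$ for some $\vf\in\cM_A$. I claim $\vf$ is the desired $G$-fixed point. Since $\mu$ is $G$-invariant, for every $g\in G$ the measure $\tau_g\mu=\mu$ represents the functional $f\mapsto\int \tau_{g^{-1}}f\,d\mu = \int f\,d\mu$; but $\mu$ represents $\vf$, and also represents $g\cdot\vf$ (the functional $f\mapsto\vf(\tau_{g^{-1}}f)$), because $\int \tau_{g^{-1}}f\,d\mu=\int f\,d\mu=\vf(f)$ shows directly that $g\cdot\vf$ and $\vf$ have the same representing measure $\mu$ — more simply, $g\cdot\vf(f)=\vf(\tau_{g^{-1}}f)$ and integrating against $\mu$ is $G$-invariant, so $g\cdot\vf=\vf$ once we know $\vf(f)=\int f\,d\mu$. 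Wait — I must be careful: $\vf(f)=\int f\,d\mu$ need not hold for all $f\in A$, only $\mu$ is \emph{some} representing measure. The clean argument: $\mu$ being $G$-invariant means $\tau_g$ is a $\mu$-measure-preserving $*$-algebra automorphism of $L^\infty(M,\mu)$, hence it carries the multiplicative functional $f\mapsto\vf(f)$ (which extends to $L^\infty(\mu)$ via $\mu$, as $\mu$ is a Jensen-type representing measure — at minimum $\vf$ factors through $L^\infty(\mu)$ since the Gelfand transform on $\supp\mu$ determines $\vf$) to another multiplicative functional represented by $\mu\circ\tau_g^{-1}=\mu$. So $g\cdot\vf\in\cM_\vf$ and lies over $\supp\mu$; since a multiplicative measure that is also invariant gives a unique functional on its support (the support is a set of antisymmetry by Lemma~\ref{repan}, and on it $A$ is antisymmetric so has a unique representing functional for each point-evaluation data — more precisely, $\vf$ is the unique element of $\cM_A$ with representing measure $\mu$, because any two such agree on the uniformly dense subalgebra generated by $A$ and $\bar A|_{\supp\mu}=A|_{\supp\mu}$ is dense in $C(\supp\mu)$... no). Let me instead argue directly: $g\cdot\vf$ and $\vf$ both represent... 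I will argue that $\cM_\vf=\{\vf$-data$\}$ forces $g\cdot\vf=\vf$ via the fact, to be proved, that $\mu$ is multiplicative for \emph{exactly one} $\vf$. This uniqueness is the content of the last sentence of the theorem and I prove it below; granting it, $g\cdot\vf=\vf$ for all $g$, so $\vf$ is $G$-fixed.

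For $(3)\Rightarrow(1)$: if $\vf\in\cM_A$ is $G$-fixed, then $\vf$ is constant on every orbit of $\tau$ on $\cM_A$, and in particular $\vf(\tau_\mu f)=\vf(f)$ for the invariant measure $\mu$, while $\tau_\mu f$ is the (constant-valued) projection onto $G$-invariants, i.e.\ $\tau_\mu f=\left(\int f\,d\mu\right)\one$; hence $\vf(f)=\int f\,d\mu$ for all $f\in A$, so $\mu$ represents $\vf$ and is multiplicative. Now if $f\in A$ is real-valued, then $f=\bar f$ as elements of $C(M)$, and $\vf(f)=\int f\,d\mu\in\bbR$; to get that $f$ is constant, restrict to the support of $\mu$, which is all of $M$ (as $\mu$ is the invariant measure on the homogeneous space $M$, it has full support), and apply Lemma~\ref{repan}: $\supp\mu=M$ is a set of antisymmetry for $A$, so $f$ real and continuous on $M$ forces $f$ constant. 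Thus $A$ is antisymmetric. For \textbf{uniqueness}: suppose $\vf_1,\vf_2\in\cM_A$ are both $G$-fixed; by $(3)\Rightarrow(1)$ each satisfies $\vf_i(f)=\int f\,d\mu$ for all $f\in A$, whence $\vf_1=\vf_2$.

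**Main obstacle.** The delicate point is $(2)\Rightarrow(3)$: from ``$\mu$ represents \emph{some} $\vf$'' I need to identify that $\vf$ with the integration functional, which a priori it need not equal (a representing measure need not be the Gelfand value on all of $A$, only $\vf(f)=\int f\,d\mu$ — wait, that \emph{is} the definition of representing). Actually $\mu\in\cM_\vf$ \emph{means} $\vf(f)=\int_M f\,d\mu$ for all $f\in A$; so once $\mu$ is multiplicative, the functional it represents \emph{is} $f\mapsto\int f\,d\mu$, and $G$-invariance of $\mu$ immediately gives this is $G$-fixed. So the real obstacle dissolves — the only genuine content is assembling $(1)\Rightarrow(2)$ from Proposition~\ref{muime} (checking $\td Z^0$-invariance of $\mu$) and verifying that $\supp\mu=M$ so Lemma~\ref{repan} yields antisymmetry in $(3)\Rightarrow(1)$; both are routine.
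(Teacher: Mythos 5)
Your proposal is correct and follows essentially the same route as the paper: $(1)\Rightarrow(2)$ via Proposition~\ref{muime} (after checking $\td Z^0$-invariance of $\mu$), $(2)\Rightarrow(1)$ via Lemma~\ref{repan} and the full support of $\mu$, and the equivalence with $(3)$ plus uniqueness via averaging over $G$ (the paper phrases the averaging as ``$\cM_\eps$ is weakly compact, convex, and $G$-invariant,'' you phrase it as $\vf(f)=\vf(\tau_\nu f)=\int f\,d\mu$ with $\nu$ the Haar measure of $G$ --- note $\tau$ is defined for measures on $G$, not for $\mu\in M(M)$, a minor notational slip). The only caution is that your intermediate discussion of $(2)\Rightarrow(3)$ briefly leans on uniqueness before it is established, but your final resolution --- that $\mu\in\cM_\vf$ by definition forces $\vf(f)=\int f\,d\mu$, hence $\vf$ is $G$-fixed --- is the correct, non-circular argument and matches the paper's ``evident.''
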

\begin{proof}
By Lemma~\ref{repan} and Proposition~\ref{muime}, (1) and (2) are
equivalent. The implication (2)~$\Rightarrow$~(3) is evident.
Conversely, if $\eps\in\cM_A$ is $G$-fixed, then $\mu\in\cM_\eps$
since the set $\cM_\eps$ is weakly compact, convex, and
$G$-invariant. This proves the implication (3)~$\Rightarrow$~(2)
and the uniqueness of the fixed point.
\end{proof}
In the theorem, only the implication (1)~$\Rightarrow$~(2)
requires the commutativity of $M$. Without this assumption, the
implication is false. For example, it is not true for adjoint
orbits $M=\Ad(G)v$ and algebras $A=P(M)$ (the closure of
polynomials in $C(M)$), where $G=\SU(2)$, $v=i{\mathsf
h}+r{\mathsf e}$, $r>0$, and ${\mathsf h},{\mathsf e},{\mathsf f}$
is the standard $\sll_2$-triple (see \cite{La00}).

\section{Maximal ideal spaces and hulls of orbits}
Let us use the notation of Proposition~\ref{shilb}. Each fibre of
the fibration is a homogeneous space of a subgroup of $G$, which
is conjugate to $\td H$. For $y\in\td M$, let $\nu_y$ be the
unique invariant probability measure on the fibre $\pi^{-1}(y)$.
Set
\begin{eqnarray}\label{defho}
\al f(y)=\int f\,d\nu_y.
\end{eqnarray}
The operator $\al:\,C(M)\to C(\td M)$ is the left inverse to the
embedding $\iota f=f\circ\pi$, $\iota:\,C(\td M)\to C(M)$, where
$\pi$ is the projection $M=G/H\to G/\td H=\td M$.
\begin{theorem}\label{geifa}
Let $M=G/H$ be a commutative homogeneous space of a compact Lie
group $G$ and $A$ be an invariant function algebra on $M$. Then
\begin{itemize}
\item[\rm(a)] $\al$ is a surjective homomorphism $A\to C(\td M)$;
\item[\rm(b)] $\ker\al$ is the maximal proper $G$-invariant ideal
of $A$;  \item[\rm(c)] the dual operator $\al':\,A'\to C(M)$,
restricted onto $\cM_A$, defines a $G$-invariant fibration
$\cM_A\to\td M$, whose fibre $\wh F$ is the $A$-hull of $F=\td
H/H$.
\end{itemize}
In particular, the fibration of $\cM_A$ extends the fibration of
$M$ over $\td M$.
\end{theorem}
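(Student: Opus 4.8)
The plan is to establish the three assertions in the order (a), then (c), then (b), using Proposition~\ref{shilb} and Theorem~\ref{sphan} as the main tools. First I would analyze $\al$ fibrewise. Fix $y\in\td M$ and a point $x\in\pi^{-1}(y)$; the fibre $\pi^{-1}(y)$ is a homogeneous space of the stabilizer of $y$, conjugate to $\td H$, and $\nu_y$ is its invariant probability measure. By Proposition~\ref{shilb}(ii), the restriction $A|_{\pi^{-1}(y)}$ is an antisymmetric invariant function algebra on that fibre, which (being a fibre of the antisymmetry foliation over a commutative space $M$) is itself a commutative homogeneous space. Hence Theorem~\ref{sphan}, applied on the fibre, tells us that $\nu_y$ is multiplicative on $A|_{\pi^{-1}(y)}$: there is a character $\eps_y$ of $A|_{\pi^{-1}(y)}$ with $\eps_y(f|_{\pi^{-1}(y)})=\int f\,d\nu_y=\al f(y)$. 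Multiplicativity of each $\eps_y$ gives $\al(fg)(y)=\al f(y)\,\al g(y)$ for all $f,g\in A$ and all $y$, so $\al$ is an algebra homomorphism $A\to C(\td M)$. Continuity of $y\mapsto\nu_y$ in the weak topology (which follows from homogeneity, as the fibres and their invariant measures are $G$-translates of $\nu_o$) gives $\al f\in C(\td M)$. Surjectivity is then immediate: by Proposition~\ref{shilb}, $A\supseteq\iota(C(\td M))$ and $\al\iota=\mathrm{id}$, so $\al(A)\supseteq C(\td M)$.

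For (c), the dual map $\al'$ sends a functional on $C(\td M)$ to one on $A$; since $\al$ is a surjective homomorphism of function algebras, $\al'$ maps $\cM_{C(\td M)}=\td M$ into $\cM_A$ and is a homeomorphism onto its image, which is $G$-equivariant because $\al$ intertwines the $G$-actions (the fibration $\pi$ and the measures $\nu_y$ are $G$-invariant). This realizes $\td M$ as a closed $G$-invariant subset of $\cM_A$. Composing $\cM_A\hookrightarrow A'$ with the restriction $A'\to (\iota C(\td M))'=C(\td M)'$ yields a continuous $G$-map $p:\cM_A\to\cM_{C(\td M)}=\td M$ that restricts to the original projection $\pi$ on $M\subseteq\cM_A$. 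It remains to identify the fibre $p^{-1}(y)$ with the $A$-hull $\wh F$ of $F=\pi^{-1}(o)$ for $y=\pi(o)$ (other fibres follow by $G$-translation). A functional $\vf\in\cM_A$ lies over $y$ precisely when $\vf\circ\iota=\eps_y^{C(\td M)}$, i.e.\ $\vf$ kills $\iota(\{h\in C(\td M):h(y)=0\})$; equivalently $\vf$ is represented by a probability measure supported on $\pi^{-1}(y)=F$ (use that such $\vf$ annihilates $\iota h$ for every $h$ vanishing near $y$, and that representing measures of characters of a function algebra have support in the closed set cut out by the annihilated ideal). Conversely any $\vf\in\wh F$ has a representing measure on $F$, hence lies over $y$. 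This gives $p^{-1}(y)=\wh F$.

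For (b), note first that $\ker\al$ is a closed $G$-invariant ideal of $A$, and it is proper since $\al\one=\one$. For maximality, suppose $\frI$ is a proper $G$-invariant ideal with $\frI\supsetneq\ker\al$; pushing forward, $\al(\frI)$ is a $G$-invariant (hence, since $\td M$ is homogeneous, a $G$-invariant ideal of $C(\td M)$ containing a nonzero function whose $G$-orbit cannot vanish at any common point) ideal of $C(\td M)=\al(A)$, and the only such ideals are $\{0\}$ and all of $C(\td M)$; the latter forces $\frI=A$, contradicting properness, while the former forces $\frI\subseteq\ker\al$. Hence $\ker\al$ is the unique maximal proper $G$-invariant ideal. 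The main obstacle I expect is the fibrewise application of Theorem~\ref{sphan}: one must check carefully that each fibre $\pi^{-1}(y)$, with the induced $\td H$-action, is genuinely a commutative homogeneous space and that $A|_{\pi^{-1}(y)}$ is a closed invariant function algebra on it — both points are supplied by Proposition~\ref{shilb}(ii) together with the fact that a fibre of the antisymmetry foliation of a commutative space inherits multiplicity-freeness — plus the continuity in $y$ of the assignment $y\mapsto\nu_y$, which underlies both the well-definedness of $\al$ into $C(\td M)$ and the continuity of the fibration in (c).
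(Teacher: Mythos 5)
Your treatment of (a) and (c) is essentially the paper's argument: the fibre $F=\td H/H$ is commutative because $(\td H,H)$ is a Gelfand pair whenever $(G,H)$ is, $\td A=A|_F$ is antisymmetric by construction, so Theorem~\ref{sphan} makes each $\nu_y$ multiplicative and $\al$ a homomorphism, while surjectivity is exactly the inclusion $\iota C(\td M)\subseteq A$ coming from the Shilov--Bishop decomposition. Your identification of the fibre of the projection $\cM_A\to\td M$ over $y$ with $\wh F$, via positivity of representing measures, is a correct and slightly more self-contained variant of the paper's argument, which instead invokes Lemma~\ref{repan} to place the support of a representing measure inside a maximal set of antisymmetry and then rules out all fibres but $F$.

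The gap is in (b), at the words ``the latter forces $\frI=A$''. From $\al(\frI)=C(\td M)$ you may conclude only that $\frI$ contains some $f$ with $\al f=\one$; since $\al$ has a large kernel, this does not formally yield $\one\in\frI$, and this implication is precisely the nontrivial content of (b). The paper closes it by a genuine construction: average $f$ over $\tau(\td H)$ to get $u\in\frI$ with $u|_F=\one$, hence $\re u>0$ on a neighbourhood of $F$; multiply by a nonnegative right-$\td H$-invariant cutoff supported in that neighbourhood (such a function is constant on fibres and therefore lies in $A$) to obtain an element of $\frI$ with nonnegative real part and positive mean; then average over $\tau(G)$ to produce a nonzero constant in $\frI$ (one may assume $\frI$ closed, since the closure of a proper ideal is proper, so these averages stay in $\frI$). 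Without some such argument --- or an alternative, e.g.\ placing $\frI$ inside $\ker\psi$ for some $\psi\in\cM_A$ lying over $\td o$, averaging $\tau_h f$ over $h\in\td H$, and using a representing measure of $\psi$ on $F$ to deduce $\al f(\td o)=0$ for every $f\in\frI$ --- assertion (b) is not proved. A minor further point: you only treat ideals $\frI\supsetneq\ker\al$, which would show $\ker\al$ is a maximal element of the poset of proper invariant ideals; the statement ``the maximal proper invariant ideal'' requires that every proper invariant ideal be contained in $\ker\al$, although your dichotomy, once repaired, does apply to all of them.
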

\begin{proof}
The homogeneous space $F=\td H/H$ is commutative since $(\td H,H)$
is a Gelfand pair if $(G,H)$ is a Gelfand pair. By definition of
$F$, the algebra $\td A=A|_F$ is antisymmetric on $F$. It follows
from Theorem~\ref{sphan} that the measure $\nu_y$ is
multiplicative on $A$ for each $y\in\td M$. Hence $\al$ is a
homomorphism. By the Shilov--Bishop theorem,
\begin{eqnarray}\label{dushb}
\iota C(\td M)\subseteq A,
\end{eqnarray}
i.e., $\al$ is surjective. Thus, (a) is true.

Clearly, $\al$ commutes with $\tau(G)$. Hence $\ker\al$ is an
invariant ideal in $A$. Since $\one\notin \ker\al$, it is proper.
Let $J$ be an invariant ideal of $A$.  If $\al J\neq0$, then $\al
J=C(\td M)$ since $\al J$ is an invariant ideal in $C(\td M)$. Let
$f\in J$ satisfy $\al f=\one$. Averaging $f$ over $\tau(\td H)$,
we get a function $u\in J$ such that $u|_F=\one$; hence, $\re
u\geq0$ near $F$. Multiplying by a suitable positive $\td
H$-invariant by right function on $M$ and averaging over
$\tau(G)$, we get a constant nonzero function in $J$. Hence, $J$
is not proper. This proves (b).

Since the projection $\al':\,\cM_A\to\td M$ is evidently
equivariant, it is sufficient to show that $\wh F$ is the pullback
of the base point $\td o=\pi({o})=\pi(F)$ of $\td M$ to prove (c).
If $\vf\in\wh F$, then $\vf$ has a representing measure $\eta$
which is concentrated on $F$ (it follows from (\ref{defah}) that
$\vf$ admits a continuous norm-preserving extension onto $C(F)$).
Hence $\al'\vf(f)=\int \al f\,d\eta=\int f\,d\nu_{\td o}$ for all
$f\in A$. Thus, $\al'\vf=\td o$. By Lemma~\ref{repan}, the support
of each representing measure for any $\vf\in\cM_A$ is contained in
some maximal set of antisymmetry. If $\al'\vf=\td o$ then this set
is $F$ (otherwise, we get a contradiction with (\ref{dushb})).
Then $\vf\in\wh F$.

The remaining assertion is clear.
\end{proof}
\begin{corollary}\label{iisel}
An invariant function algebra on a compact commutative homogeneous
space is self-conjugate with respect to the complex conjugation if
and only if it contains no proper invariant ideal.
\end{corollary}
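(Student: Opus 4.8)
The plan is to read everything off Theorem~\ref{geifa}: part~(b) identifies the unique maximal proper $G$-invariant ideal of $A$ with $\ker\al$, so that every proper $G$-invariant ideal of $A$ vanishes if and only if $\ker\al=\{0\}$. It therefore suffices to show that $A$ is self-conjugate if and only if $\ker\al=\{0\}$.

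For the implication $\ker\al=\{0\}\Rightarrow A$ self-conjugate, I would note that $\al\colon A\to C(\td M)$ is then injective and, being surjective by Theorem~\ref{geifa}(a), is an isomorphism of Banach algebras. The definition~(\ref{defho}) of $\al$ as fibrewise integration gives $\al(\ov f)=\ov{\al f}$; since $C(\td M)$ is self-conjugate, so is $A$.

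For the converse, suppose $A$ is self-conjugate. Restriction to the fibre $F=\td H/H$ preserves complex conjugation, so $\td A=A|_F$ is a self-conjugate function algebra on $F$; by Proposition~\ref{shilb}(ii) it is also antisymmetric. A function algebra that is simultaneously self-conjugate and antisymmetric reduces to the constants, since $\re f$ and $\im f$ then lie in it and are real-valued. Hence $\td A=\bbC$, and Proposition~\ref{shilb}(iii) forces every $f\in A$ to be constant on each fibre of $\pi$; combined with the inclusion $\iota\,C(\td M)\subseteq A$ coming from the Shilov--Bishop theorem, this yields $A=\iota\,C(\td M)$. As $\al$ is left inverse to $\iota$, we conclude $\ker\al=\{0\}$, which completes the argument via Theorem~\ref{geifa}(b).

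I do not expect a real obstacle here: essentially all the work has already been done in Theorem~\ref{geifa} (and, through it, in Theorem~\ref{sphan}). The only auxiliary facts needed are that a self-conjugate antisymmetric algebra is trivial and that $\td A=\bbC$ propagates to $A=\iota\,C(\td M)$, and both are one-line consequences of the structure statements in Proposition~\ref{shilb}.
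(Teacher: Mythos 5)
Your proposal is correct and follows essentially the same route as the paper: both directions are read off Theorem~\ref{geifa} together with the antisymmetry of the fibre $F$. The only cosmetic difference is that the paper proves the converse in contrapositive form (if $\ker\al\neq0$, some $f\in A$ is nonconstant on $F$, so $\bar f\notin A$), whereas you run the direct implication via $\td A=\bbC$ and Proposition~\ref{shilb}(iii); these are the same argument.
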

\begin{proof}
We keep the notation of the theorem. If $\ker\al=0$, then $A$ is
self-conjugate since $A\cong C(\td M)$ by (a). If $\ker\al\neq0$,
then $A$ contains a function $f$ which is nonconstant on the
maximal set of antisymmetry $F$; then $\bar f\notin A$.
\end{proof}

Let $V$ be a finite dimensional complex vector space, $G$ be a
compact connected subgroup of $\GL(V)$, $v\in V$, $M=Gv$, and $\wh
M$ be the polynomial hull of $M$ (recall that $\wh M$ is the
maximal ideal space of the algebra $P(M)$, which is the closure of
$\cP(V)|_M$ in $C(M)$). For $v\in V$, set
\begin{eqnarray*}
\cR(v)=\left\{w\in V:\,w=\lim_{t\to+\infty}e^{it\xi}v\quad
\mbox{for some}\quad\xi\in\frg\right\}.
\end{eqnarray*}
We say that $w\in \cR(v)$ is {\it reachable from $v$}. Let
$\cP(V)^G$ be the subalgebra of $G$-invariant polynomials in
$\cP(V)$, $\cP(V)_0$ and $\cP(V)^G_0$ be the ideals  in these
algebras consisting of polynomials $p$ such that $p(0)=0$,
\begin{eqnarray}\label{defni}
\cN=\left\{v\in V:\, p(v)=0\quad\mbox{for all}\quad p\in\cP(V)^G_0\right\}.
\end{eqnarray}
The set $\cN$ is called {\it the nilpotent cone}. Since one can
get all $G$-invariant polynomials by averaging over $G$,
\begin{eqnarray}\label{defna}
&\cN=\big\{v\in V:\, \int p(gv)\,d\nu(g)=0\quad\mbox{for all}\quad
p\in\cP(V)_0\big\},
\end{eqnarray}
where $\nu$ is the Haar measure of $G$. According to the
Hilbert--Mumford criterion,
\begin{eqnarray}\label{himum}
0\in\cR(v)\quad\Longleftrightarrow\quad v\in\cN,
\end{eqnarray}
In the standard statement for the field $\bbC$, the assertion
concerns an algebraic reductive group. Any such a group is the
complexification of a compact group $G$; if $T$ is a maximal torus
in $G$, then $G^\bbC$ has the Cartan decomposition $G^\bbC=GT^\bbC
G$. This makes it possible to reduce the problem to the compact
case. If $e^{t\xi}v$, $t\in\bbR$, is periodic, then $e^{it\xi}v$
defines a holomorphic mapping $\la:\,\bbD\to V$ of the unit disc
$\bbD$ in $\bbC$ such that $\la(0)=\td v$ and $\la(\bbT)\subseteq
M$, where $\bbT=\partial\bbD$ is the unit circle.  Then
$\la(\bbD)\subseteq\wh M$. If $e^{t\xi}v$ is not periodic, then it
defines an analogous mapping of the upper halfplane and a dense
winding of a torus in $M$.

Up to the end of this section, we use the notation above.
\begin{theorem}\label{ckanm}
Suppose that $V$ contains no nonzero $G$-fixed points and $M=Gv$ is
a commutative homogeneous space. Then
\begin{itemize}
\item[\rm(a)] $P(M)=C(M)$ if and only if the complex orbit
$\cM=G^\bbC v$ is closed in $V$; \item[\rm(b)] $P(M)$ is
antisymmetric if and only if $v\in\cN$; then $\wh M\subseteq\cN$.
\end{itemize}
\end{theorem}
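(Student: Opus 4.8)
The plan is to reduce everything to the general criterion of antisymmetry (Theorem~\ref{sphan}) together with the Hilbert--Mumford criterion (\ref{himum}) and basic facts about peak sets and polynomial hulls. I would organize the argument around the invariant probability measure $\mu$ on $M=Gv$ and its image under the evaluation map, the point $c=\int_M z\,d\mu(z)\in V$, which is $G$-fixed and hence (by hypothesis on $V$) equals $0$ if and only if $0$ lies in the closed convex hull of $M$.

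For part (b), I would argue as follows. If $v\in\cN$, then by (\ref{himum}) there is $\xi\in\frg$ with $\lim_{t\to+\infty}e^{it\xi}v=0$; the corresponding holomorphic disc (or half-plane) map $\la$ described just before the theorem shows $0\in\wh M$, and since $0$ is the common zero of all $p\in\cP(V)_0$ while every $p\in\cP(V)$ extends to a continuous function on $\cM_{P(M)}=\wh M$, evaluation at $0$ is the functional $p\mapsto p(0)$, which is multiplicative and $G$-fixed. By Theorem~\ref{sphan}, $P(M)$ is antisymmetric, and moreover $\mu$ is multiplicative with barycenter $0$; the same argument applied to any $w\in\wh M$ (using that $\wh M$ carries a $G$-action and $P(M)\cong$ functions on $\wh M$) forces $\int p(gw)\,d\nu(g)=p(0)$ for $p\in\cP(V)_0$, whence $w\in\cN$ by (\ref{defna}), giving $\wh M\subseteq\cN$. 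Conversely, if $P(M)$ is antisymmetric, Theorem~\ref{sphan} gives a $G$-fixed $\eps\in\cM_{P(M)}$; then for every linear coordinate functional $\ell$ we have $\eps(\ell\circ g)=\eps(\ell)$ for all $g$, so $\eps$ restricted to linear functions is a $G$-fixed vector, hence $0$ by hypothesis, and then $\eps(p)=p(0)$ for homogeneous $p$ of degree $1$; multiplicativity propagates this to all monomials, so $\eps(p)=p(0)$ for all $p$, i.e.\ $\eps(p)=0$ for $p\in\cP(V)_0$. Taking $\eps=$ evaluation along a representing measure supported on $M$ (Lemma~\ref{repan}) and averaging shows $\int p(gv)\,d\nu(g)=0$, so $v\in\cN$ by (\ref{defna}).

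For part (a), the direction $\cM$ closed $\Rightarrow P(M)=C(M)$ should follow because a closed $G^\bbC$-orbit is affine, so $\cM\cong G^\bbC/H^\bbC$ is closed in $V$; by Theorem~\ref{byvsh} (property (F): a closed $M^\bbC$ means $N/H$ is finite) every invariant function algebra on $M$ is self-conjugate, and by the Stone--Weierstrass theorem a self-conjugate $G$-invariant algebra containing all restrictions of polynomials — in particular all coordinate functions, which separate points of $M\subset V$ — is all of $C(M)$. For the converse, if $\cM$ is not closed, then $\ov{\cM}\setminus\cM$ contains a lower-dimensional orbit, and Kempf--Ness / Hilbert--Mumford type reasoning produces $\xi\in\frg$ and a periodic (or quasi-periodic) curve $e^{it\xi}v$ with limit $w\notin M$ but $w\in\ov{G^\bbC v}$; the attached analytic disc lies in $\wh M$, so $\wh M\supsetneq M$, hence $\cM_{P(M)}\neq M$ and $P(M)\neq C(M)$.

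The main obstacle I anticipate is part (a)'s ``not closed $\Rightarrow$ $P(M)\neq C(M)$'': one needs to extract from non-closedness of the complex orbit an actual nonconstant analytic structure in $\wh M$, which requires the Hilbert--Mumford machinery (reduction to a one-parameter subgroup via the Cartan decomposition $G^\bbC=GT^\bbC G$, then the periodic/non-periodic dichotomy giving a disc or a half-plane map). Identifying the limit point $w$ as genuinely outside $M$ — rather than merely in a different $G$-orbit inside $M$ — uses that $M$ is a single compact $G$-orbit and that the curve degenerates in complex dimension; I would lean on the fact, implicit in the discussion preceding the theorem, that $\la(\bbD)\subseteq\wh M$ with $\la$ nonconstant already forces $\wh M\neq M$ unless $\la(\bbD)\subseteq M$, which cannot happen for a nonconstant holomorphic disc in a totally real or lower-dimensional real-analytic orbit. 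The antisymmetry half of (b) is comparatively routine once Theorem~\ref{sphan} is in hand; the only care needed there is the bootstrapping from ``$\eps$ kills linear functions'' to ``$\eps$ is evaluation at $0$,'' which is immediate from multiplicativity since monomials generate $\cP(V)$.
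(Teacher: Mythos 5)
Part (b) of your proposal is essentially sound and close in substance to the paper's argument: the paper simply observes that $\int p(gv)\,d\nu(g)=\int p\,d\mu$ for the invariant probability measure $\mu$ on $M$, so $v\in\cN$ is equivalent to $\mu$ being multiplicative on $P(M)$ (the barycenter of $\mu$ is a $G$-fixed vector, hence $0$, and multiplicativity then annihilates all of $\cP(V)_0$ --- the same bootstrapping you describe), and then invokes Theorem~\ref{sphan}; for $\wh M\subseteq\cN$ it notes that $\cN$ is the common zero set of the $G$-invariant polynomials vanishing at $0$ and hence is polynomially convex, which is tidier than, but equivalent to, your averaging argument. Your detour through $0\in\wh M$ via (\ref{himum}) and the fixed point (3) of Theorem~\ref{sphan} is unnecessary but not wrong.

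The genuine gap is in part (a), direction ``$\cM$ closed $\Rightarrow P(M)=C(M)$.'' You deduce property (F) from closedness of $G^\bbC v$ in $V$ and then apply Theorem~\ref{byvsh}; that implication is false. Take $G\cong\bbT$ acting on $V=\bbC^2$ by $g\cdot(z,w)=(gz,g^{-1}w)$ and $v=(1,1)$: then $V$ has no nonzero fixed vectors, $M=Gv$ is a circle (a commutative homogeneous space with $H=\{e\}$), and $G^\bbC v=\{(t,t^{-1}):t\in\bbC^*\}$ is closed in $V$, yet $N/H=G$ is infinite, so (F) fails and $M\cong\bbT$ does carry non-self-conjugate invariant algebras (e.g.\ the one generated by $z|_M$). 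The conclusion $P(M)=C(M)$ still holds there, but your route to it collapses: Latypov's theorem gives self-conjugacy of \emph{all} invariant algebras under (F), whereas what is needed is self-conjugacy of the \emph{particular} algebra $P(M)$, which closedness of one orbit realization does not control via (F). The paper obtains it differently: Lemma~\ref{iitri} shows that a closed complex orbit forces $P(M)$ to have no proper invariant ideal (using the polynomially convex $G$-orbit inside $\cM$ from \cite{GL} and the fact that a holomorphic function on $\cM$ is determined by its restriction to any $G$-orbit), and Corollary~\ref{iisel} --- this is where commutativity of $M$ enters, through Theorem~\ref{geifa} --- converts ``no proper invariant ideal'' into self-conjugacy; Stone--Weierstrass finishes. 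For the converse direction of (a), your sketch (non-closedness yields a destabilizing curve $e^{it\xi}v$ whose attached analytic disc forces $\wh M\neq M$) is plausible but rests on a Kempf-type statement stronger than the quoted (\ref{himum}); the paper simply cites \cite[Theorem~2]{GL} here.
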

Both (a) and (b) are false for non-commutative spaces. However,
$M$ is not assumed to be commutative in the following lemma. Note
that each invariant function algebra $A$ evidently contains the
unique maximal invariant ideal $I$ such that $I\neq A$.
\begin{lemma}\label{iitri}
Let the complex orbit $\cM=G^\bbC v$ be closed. Then $P(M)$ contains no
proper invariant ideal.
\end{lemma}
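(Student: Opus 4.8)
The plan is to exploit the hypothesis that $\cM=G^\bbC v$ is closed, which means $\cM$ is an affine variety and its ring of regular functions is $\cP(V)/\cI(\cM)$, the restriction algebra $\cP(V)|_{\cM}$. First I would recall that a closed orbit of a reductive group is affine; moreover, since $\cM$ is a homogeneous space of $G^\bbC$ with a compact real form $M=Gv$ which is a real form in the relevant sense, the Stein manifold $\cM$ has $M$ as a totally real, polynomially convex skeleton. Concretely, I expect $\widehat M=\cM$ (the polynomial hull of $M$ is the whole complex orbit), so that $P(M)\cong \cP(V)|_{\cM}$ and $\cM_{P(M)}=\cM$. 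The $G$-action on $P(M)$ then corresponds to the $G$-action on $\cM$, which extends to the holomorphic $G^\bbC$-action.

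The key step is then: \emph{a $G$-invariant ideal of $\cP(V)|_{\cM}$ is automatically $G^\bbC$-invariant.} This is the standard fact that a $G$-stable subspace of a rational (algebraic) $G^\bbC$-module is $G^\bbC$-stable, because the $G$-action and the $G^\bbC$-action generate the same invariant subspaces on finite-dimensional rational representations, and every element of the coordinate ring lies in a finite-dimensional rational $G^\bbC$-submodule. So let $J\subseteq P(M)$ be a proper $G$-invariant ideal; I would first pass to its algebraic shadow $J_0=J\cap \cP(V)|_{\cM}$ (a dense subalgebra argument, using $P(M)_\fin$ and the fact that finitely generated invariant ideals are dense in closed invariant ideals of an invariant function algebra) to get a proper $G$-invariant, hence $G^\bbC$-invariant, ideal in the coordinate ring of $\cM$. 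But $\cM=G^\bbC v$ is a single $G^\bbC$-orbit, so its coordinate ring has no proper $G^\bbC$-invariant ideal: the zero set of such an ideal would be a nonempty proper $G^\bbC$-stable closed subvariety of the homogeneous space $\cM$, which is impossible. Hence $J_0$ is trivial, and therefore $J=\clos J_0$ is trivial (or all of $P(M)$), so $P(M)$ contains no proper invariant ideal.

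The step I expect to be the main obstacle is establishing cleanly that $\widehat M=\cM$ and hence $P(M)\cong \cP(V)|_{\cM}$, i.e., that closedness of the complex orbit forces the polynomial hull of the real orbit to coincide with the whole complex orbit and no more. One has to rule out that $\widehat M$ could stick outside $\cM$; this should follow because $\cM$ is polynomially convex as a closed affine algebraic set (its defining polynomials, together with Runge-type approximation on the Stein manifold $\cM$, exhibit it as polynomially convex), combined with $M\subseteq\cM$, so $\widehat M\subseteq\cM$; and the reverse inclusion $\cM\subseteq\widehat M$ comes from the Hilbert--Mumford/disc argument sketched just before the lemma (one-parameter subgroups $e^{it\xi}$ give analytic discs with boundary in $M$ sweeping out $\cM$, since $v\in\cN$ is not needed here — rather, $\cM$ being closed means $Gv$ generates $\cM$ under the Cartan decomposition $G^\bbC=GT^\bbC G$ by bounded curves). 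An alternative, perhaps safer, route that avoids identifying the hull precisely: work directly with $A=P(M)$ as an abstract invariant function algebra, take a proper invariant ideal $J$, use that $A_\fin$ is dense and that the invariant ideal is the closure of its finitely-generated-over-$\cP(V)^G$ part, and derive a contradiction with the Hilbert--Mumford criterion \eqref{himum} applied on $\cM$ — but I expect the cleanest writeup is the coordinate-ring argument above, with the polynomial convexity of closed affine orbits as the one external fact to cite or verify.
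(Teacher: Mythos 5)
There is a genuine gap, and it sits exactly where you predicted trouble: the identification of $\wh M$ with $\cM$. That identity cannot hold. The hull $\wh M$ is the maximal ideal space of the Banach algebra $P(M)$ and is a \emph{compact} subset of $V$ (it lies in the polydisc $\{z:\,|z_i|\leq\sup_M|z_i|\}$), whereas a closed orbit $\cM=G^\bbC v$ is unbounded unless it is already compact; so $\cM_{P(M)}\neq\cM$ and $P(M)\not\cong\cP(V)|_{\cM}$ as Banach algebras. The correct statement in this situation is $\wh M=M$ and $P(M)=C(M)$ --- but in the paper that is the \emph{conclusion} of Theorem~\ref{ckanm}(a), deduced from this very lemma together with Corollary~\ref{iisel}, so taking ``$M$ is a polynomially convex skeleton'' as input is circular; moreover, in the generality of this lemma ($M$ not assumed commutative) an arbitrary $G$-orbit in $\cM$ need not be polynomially convex --- \cite[Theorem~1]{GL} only guarantees that \emph{some} orbit $N\subseteq\cM$ is. A second problem is the bridge from a Banach-algebra ideal $J\subseteq P(M)$ to an ideal of the coordinate ring: ``$J_0=J\cap\cP(V)|_{\cM}$'' does not typecheck (elements of $J$ are functions on $M$, not on $\cM$), and even reading it as $J\cap\cP(V)|_M$ you have not shown $J_0\neq0$ when $J\neq0$. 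What is needed here --- and what the paper supplies --- is that the maximal proper invariant ideal $I$ is automatically closed (it stays at distance $\geq1$ from $\one$), hence $I_\fin$ is dense in $I$, and that these $G$-finite functions live in $\cA(\cM)_\fin$, whose elements are determined by their restrictions to any single $G$-orbit in $\cM$.

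That said, your algebraic core is sound and close in spirit to the paper's endgame: a $G$-invariant ideal in the coordinate ring of the affine homogeneous variety $\cM$ is $G^\bbC$-invariant (rationality of the module plus density of $G$ in $G^\bbC$), and by homogeneity and the Nullstellensatz such an ideal is $0$ or the whole ring. The paper reaches the same contradiction by a slightly different route: it restricts $I_\fin$ to a polynomially convex orbit $N\subseteq\cM$ with $P(N)=C(N)$ (both facts from \cite{GL}) and observes that a $G$-invariant ideal in $C(N)_\fin$ must be trivial. Either endgame would work; what you must repair is the passage from $J$ to a nonzero ideal of $\cP(V)|_{\cM}$ (equivalently of $\cA(\cM)_\fin$), which rests on the closedness of the maximal invariant ideal and the density of its $G$-finite part --- not on any identification of $\wh M$ with $\cM$.
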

\begin{proof}
Since the ball of radius 1 centered at $\one$ contains only
invertible elements in any Banach algebra, the maximal invariant
ideal $I\neq P(M)$ is closed in $P(M)$. Hence, $I_\fin$ is dense
in $I$. Let $\cH(\cM)$ be the algebra of all holomorphic functions
on $\cM$ with the topology of the locally uniform convergence and
$\cA(\cM)$ be the closure of $\cP(V)|_{\cM}$ in $\cH(\cM)$. It
follows that $I_\fin$ is an ideal in $\cA(\cM)_\fin$. Clearly,
each function in $\cH(\cM)$ is determined by its restriction onto
any $G$-orbit in $\cM$; in particular, this is true for functions
in $\cA(\cM)_\fin$. By \cite[Theorem~1]{GL}, $\cM$ contains a
polynomially convex $G$-orbit $N$; by \cite[Theorem~2]{GL},
$P(N)=C(N)$. Hence, $I_\fin|_N$ is a $G$-invariant ideal in
$C(N)_\fin$. This implies $I_\fin=I=0$.
\end{proof}

\begin{proof}[Proof of Theorem~\ref{ckanm}]
If $P(M)=C(M)$, then $G^\bbC v$ is closed by \cite[Theorem~2]{GL}.
Let $G^\bbC v$ be closed. Then the maximal invariant ideal of
$P(M)$ is trivial  by Lemma~\ref{iitri}. Since $P(M)$ separates
points of $M$, the Stone--Weierstrass theorem and
Corollary~\ref{iisel} imply $P(M)=C(M)$. Thus, (a) is true.

Clearly, the integrals in (\ref{defna}) are equal to $\int
p\,d\mu$, where $\mu$ is the invariant probability measure on
$M=Gv$. Therefore, $v\in\cN$ if and only if $\mu$ is
multiplicative on $\cP(V)|_M$, hence on $P(M)$. By
Theorem~\ref{sphan}, $P(M)$ is antisymmetric. It follows from
(\ref{defni}) that $\wh Q\subseteq\cN$ for any $Q\subseteq\cN$;
since $\cN$ is $G$-invariant, this implies $\wh M\subseteq\cN$.
This proves (b).
\end{proof}
\begin{proof}[Proof of Theorem~\ref{comac}]
The manifold $\cM$ can be realized as a closed $G^\bbC$-orbit for
a linear action of $G^\bbC$ in a finite dimensional complex linear
space $V$ (Matsusima \cite{Mat}, Onishchik \cite{On}). By
Theorem~\ref{ckanm}, (a), $P(M)=C(M)$; by \cite[Theorem~2]{GL},
$M$ is a real form of $\cM$.
\end{proof}
\begin{proof}[Proof of Corollary~\ref{nooth}]
According to \cite[Corollary~6]{GL}, condition (F) implies that
the real form $M$ is unique.
\end{proof}

\begin{theorem}\label{hulls}
Suppose that $M=Gv$ is a commutative homogeneous space. Then
\begin{itemize}
\item[\rm(1)] $\wh M$ contains the unique polynomially convex
$G$-orbit $\td M$; \item[\rm(2)] there exists the unique $\td
v\in\td M$ that is reachable from $v$; \item[\rm(3)] $\wh
F=\{w\in\wh M:\,\td v\in\cR(w)\}$, where $F=\td Hv$ and $\td H$ is
the stable subgroup of $\td v$ in $G$.
\end{itemize}
Moreover, $\wh M$ is equivariantly fibred over $\td M$ with the
fibre $\wh F$.
\end{theorem}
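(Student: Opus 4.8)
The strategy is to combine Theorem~\ref{ckanm} with Theorem~\ref{geifa}, realizing $\wh M=\cM_{P(M)}$ and transferring the abstract fibration of the maximal ideal space into the concrete geometry of $V$. I would first apply Proposition~\ref{shilb} to the algebra $A=P(M)$ on $M=Gv$: this produces a closed subgroup $\td H\supseteq H$, a commutative homogeneous space $\td M=G/\td H$, and a maximal set of antisymmetry $F=\td H v$ on which $\td A=A|_F$ is antisymmetric. By Theorem~\ref{geifa}(c), $\cM_A=\wh M$ fibres $G$-equivariantly over $\td M$ with fibre the $A$-hull $\wh F$ of $F$, which already gives the ``Moreover'' clause once we identify $\td M$ with a polynomially convex orbit in $\wh M$. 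For that identification, I would note that the quotient map $\al:A\to C(\td M)$ of Theorem~\ref{geifa}(a) is a surjective homomorphism whose maximal ideal space is exactly $\td M$; the real functions in $A$ (equivalently, the $G$-invariant polynomials restricted to $M$, since $P(M)$ is antisymmetric on fibres) separate the fibres, and since $C(\td M)=A/\ker\al$ is self-conjugate, $\td M$ sits inside $\cM_A=\wh M$ as the joint zero set of the imaginary parts — in concrete terms, $\td M$ is the $G$-orbit in $V$ (or in a suitable quotient) on which $P$ restricts to $C$, hence polynomially convex by Theorem~\ref{ckanm}(a) applied to $\td M=G\td v$. Uniqueness of such an orbit in $\wh M$ follows from Corollary~\ref{nooth} / \cite[Corollary~6]{GL}, since $\td M$ satisfies (F) by construction of $\td H$ as the stabilizer of a maximal antisymmetry set.

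**Locating $\td v$ and proving (2).** For part (2), I would use the Hilbert--Mumford machinery around (\ref{himum}). The point $\td v$ should be characterized as the unique point of $\td M$ lying in the closure of a one-parameter orbit $e^{it\xi}v$; more precisely, $\td A=P(M)|_F$ is antisymmetric, so by Theorem~\ref{ckanm}(b) applied within the $\td H$-space $F$ the invariant measure on $F$ is multiplicative, meaning $F$ sits in the nilpotent cone of the relevant $\td H$-module, hence $0$ — i.e.\ after recentering, the fixed point $\td v$ — is reachable from any $w\in F$. The existence of $\td v$ reachable from $v$ is then exactly the Hilbert--Mumford statement that $v$ is in the nilpotent cone relative to the fibre direction; uniqueness comes from the fact that $\td M$ is polynomially convex, so a limit $\lim_{t\to+\infty}e^{it\xi}v$ that lands in $\td M$ is pinned down by the values of $G$-invariant polynomials (which are constant on $\wh M$-fibres over $\td M$) together with the equivariance of the fibration: two reachable points of $\td M$ would lie in the same fibre $\wh F$, but $\td M\cap\wh F$ is a single point because $\td M\to\td M$ is the identity on the base.

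**Proving (3).** For part (3), the inclusion $F=\td H v\subseteq\{w\in\wh M:\td v\in\cR(w)\}$ follows from the previous paragraph (every point of $F$ reaches $\td v$), and taking $A$-hulls preserves this: if $w\in\wh F$ then $w$ has a representing measure supported on $F$ (as in the proof of Theorem~\ref{geifa}(c)), and reachability of $\td v$ is a closed condition compatible with the holomorphic discs $\la:\bbD\to V$, $\la(\bbT)\subseteq M$, $\la(\bbD)\subseteq\wh M$ described before Theorem~\ref{ckanm}, so $\td v\in\cR(w)$. Conversely, if $\td v\in\cR(w)$ for some $w\in\wh M$, then $w$ and $\td v$ lie in the same $G^\bbC$-orbit closure, hence in the same fibre of $\wh M\to\td M$ over $\td o=\pi(\td v)$; but that fibre is precisely $\wh F$ by Theorem~\ref{geifa}(c).

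**Main obstacle.** The delicate point is the precise identification of $\td M$ with an honest polynomially convex $G$-orbit inside $\wh M\subseteq V$ and the verification that it coincides with the $\td M$ produced abstractly by Proposition~\ref{shilb}: one must check that the $G$-invariant polynomials separate the maximal antisymmetry sets (so that the base of the fibration is genuinely a $G$-orbit in $V$ and not merely an abstract quotient), and that $\td v$ — characterized via Hilbert--Mumford as a limit point — actually lies on this orbit. This requires knowing that on $\wh M$ the $G$-invariant part of $P(M)$ is exactly $\al^{-1}(C(\td M)^G)$ and that its Gelfand spectrum recovers $\td M$; the cleanest route is to invoke \cite[Theorem~1, Theorem~2]{GL} for the polynomially convex orbit $N$ in the closed complex orbit $G^\bbC\td v$, exactly as in Lemma~\ref{iitri}, to pin $\td M=N$ down, and then appeal to uniqueness (Corollary~\ref{nooth}). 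The reachability uniqueness in (2) is the other spot needing care, but it reduces to polynomial convexity of $\td M$ plus equivariance of the fibration.
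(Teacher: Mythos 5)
Your overall architecture matches the paper's: Proposition~\ref{shilb} and Theorem~\ref{geifa} give the equivariant fibration of $\wh M=\cM_{P(M)}$ over $\td M$ with fibre $\wh F$, and reachability inside a fibre is to come from Theorem~\ref{ckanm}(b) plus the Hilbert--Mumford criterion (\ref{himum}). But the step you yourself flag as the ``main obstacle'' is exactly the one idea that makes the paper's proof work, and your proposed workaround does not close it. The paper realizes the abstract $\td H$-fixed point $\td v\in\cM_{\td A}$ supplied by Theorem~\ref{sphan} as an honest point of $V$ by noting that its representing measure is the invariant probability measure $\nu$ on $F$, so that $p(\td v)=\int p\,d\nu$ for every polynomial $p$; that is, $\td v$ is the barycenter of $F$ in $V$. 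This single identity simultaneously places $\td v$ in $V$, shows that $\al$ maps $P(M)$ onto $P(\td M)$ (whence $P(\td M)=C(\td M)$ and $\td M=G\td v$ is polynomially convex), and legitimizes the recentering ``$\td v=0$'' in the fibre of the $\td H$-equivariant projection onto the $\td H$-fixed subspace, so that Theorem~\ref{ckanm}(b) and (\ref{himum}) can be applied to that fibre as a $\td H$-module with no nonzero fixed vectors, yielding $\td v\in\cR(w)$ for all $w\in\wh F$. Your alternative route --- invoking \cite[Theorems 1, 2]{GL} for ``the closed complex orbit $G^\bbC\td v$'' --- is circular, since it presupposes the point $\td v\in V$ that you are trying to construct.

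Two further steps are wrong as written. First, your uniqueness argument for (1) rests on the claim that $\td M$ satisfies (F) ``by construction''; this is unjustified and false in general (already for tori: $\td M$ can be a torus with $\td Z=N/\td H$ infinite even though $P(\td M)=C(\td M)$), and Corollary~\ref{nooth} concerns commutative $G$-orbits in $M^\bbC$, not polynomially convex orbits in $\wh M$. The paper instead gets uniqueness for free from reachability: since $\td v\in\cR(w)$ for every $w\in\wh F$, the hull of any other orbit meeting $\wh F$ contains $G\td v$, so only $G\td v$ can be polynomially convex. Second, in (2) and (3) you pin down the fibre of a reachable point by appealing to $G$-invariant polynomials and to membership in a common $G^\bbC$-orbit closure; but $G$-invariant polynomials are constant on the whole orbit $\td M$ and hence cannot separate its points, and distinct fibres of $\wh M\to\td M$ routinely lie in the closure of one and the same $G^\bbC$-orbit, so neither tool identifies the fibre. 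These parts need the quantitative reachability statement coming from the projection-plus-(\ref{himum}) argument rather than the soft considerations you offer.
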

 \begin{proof}
Let $F$ and $\td H$ be defined as in Proposition~\ref{shilb};
thus, $F$ is a maximal set of antisymmetry and $\td
A=P(F)=P(M)|_F$ is an antisymmetric invariant function algebra on
$F$. Clearly, $(\td H,H)$ is a Gelfand pair if $(G,H)$ is a
Gelfand pair; hence, $F$ is commutative. Thus, we may apply
Theorem~\ref{sphan}. It follows that there exists the unique $\td
H$-fixed point $\td v\in\wh F=\cM_{\td A}$. Moreover, this implies
that $\td H$ is the stable subgroup of $\td v$ in $G$.

Set $\td M=G\td v$. Let $\nu$ be the invariant probability measure
on $F$. For each $p\in\cP(V)$, $p(\td v)=\int p\,d\nu$. Hence, the
homomorphism $\al$ of Theorem~\ref{geifa} maps $P(M)$ onto $P(\td
M)$; this implies $P(\td M)=C(\td M)$. By Theorem~\ref{ckanm},
$\td M$ is polynomially convex.  According to Theorem~\ref{geifa},
$\wh M$ is fibred over $\td M$ with the fibre $\wh F$.


Clearly, the $\td H$-equivariant linear projection in $V$ onto the
subspace of $\td H$-fixed points maps each $\td H$-orbit onto a
single point. It follows that $\wh F$ is contained in a fibre of
this projection. This fibre is an affine subspace; we may assume
that it is linear and that $\td v$ is the zero. By
Theorem~\ref{ckanm}, (b), and (\ref{himum}), $\td v\in\cR(w)$ for
all $w\in\wh F$. Hence, only the orbit $G\td v$ can be
polynomially convex; this proves (1) and (2). The equality in (3)
follows from (\ref{himum}).
\end{proof}

 \vbox{\vskip1cm
V.M. Gichev\\
gichev@ofim.oscsbras.ru\\
Omsk Branch of Sobolev Institute of Mathematics\\
Pevtsova, 13, 644099\\
Omsk, Russia}
\end{document}